\documentclass[11pt,a4paper]{article}

\usepackage[T1]{fontenc}
\usepackage[utf8]{inputenc}
\usepackage{lmodern}
\usepackage[a4paper,margin=28mm,headheight=14pt,headsep=7mm,footskip=11mm]{geometry}
\usepackage{amsmath,amssymb,amsthm,mathtools}
\usepackage{microtype}
\usepackage{fancyhdr}
\usepackage{needspace}
\usepackage{aliascnt}
\usepackage[hidelinks,pdfencoding=auto,psdextra]{hyperref}
\usepackage[nameinlink,noabbrev,capitalise]{cleveref}

\hypersetup{
  pdftitle={Sudakov minoration for unconditional log-concave vectors with negatively associated magnitudes},
  pdfauthor={Witold Bednorz, Rafa\l{} Martynek and Rafa\l{} Meller},
  pdfsubject={Sudakov minoration, common witnesses and negative association},
  pdfkeywords={Sudakov minoration, log-concave measure, negative association, common witnesses, Bernoulli process}
}

\allowdisplaybreaks[1]
\numberwithin{equation}{section}

\newtheorem{theorem}{Theorem}[section]
\newaliascnt{lemma}{theorem}
\newtheorem{lemma}[lemma]{Lemma}
\aliascntresetthe{lemma}
\newaliascnt{proposition}{theorem}
\newtheorem{proposition}[proposition]{Proposition}
\aliascntresetthe{proposition}
\newaliascnt{corollary}{theorem}
\newtheorem{corollary}[corollary]{Corollary}
\aliascntresetthe{corollary}
\theoremstyle{definition}
\newtheorem*{definition}{Definition}
\theoremstyle{remark}
\newaliascnt{remark}{theorem}
\newtheorem{remark}[remark]{Remark}
\aliascntresetthe{remark}
\crefname{lemma}{Lemma}{Lemmas}
\crefname{proposition}{Proposition}{Propositions}
\crefname{corollary}{Corollary}{Corollaries}
\crefname{remark}{Remark}{Remarks}

\newcommand{\E}{\mathbb E}
\newcommand{\Prob}{\mathbb P}
\newcommand{\R}{\mathbb R}
\newcommand{\one}{\mathbf 1}
\newcommand{\supp}{\operatorname{supp}}
\newcommand{\conv}{\operatorname{conv}}
\newcommand{\ip}[2]{\langle #1,#2\rangle}
\newcommand{\norm}[1]{\lVert #1\rVert}
\newcommand{\abs}[1]{\lvert #1\rvert}
\newcommand{\W}{\mathcal W}
\newcommand{\calC}{\mathcal C}
\newcommand{\calK}{\mathcal K}
\newcommand{\calN}{\mathcal N}

\title{\textbf{Sudakov minoration for unconditional\\
log-concave vectors with negatively\\
associated magnitudes}}
\author{Witold Bednorz\qquad Rafa\l{} Martynek\qquad Rafa\l{} Meller\\[1ex]
  \small Faculty of Mathematics, Informatics and Mechanics\\
  \small University of Warsaw\\
  \small\href{mailto:wbednorz@mimuw.edu.pl}{\texttt{wbednorz@mimuw.edu.pl}}}
\date{}

\begin{document}
\maketitle
\thispagestyle{plain}

\begin{abstract}
We prove the Sudakov minoration principle, with a universal constant,
for unconditional log-concave random vectors whose coordinate magnitudes
are negatively associated. The main step is a deterministic selection
theorem: pairwise witnesses in convex downward-closed sets yield, on an
exponentially large subfamily, one fixed witness per label separating
it from every other retained label. The selection combines a prefix
counting inequality with convex separation in the space of all
label-coordinate arrays. Negative association then controls the
expected number of active joint-tail witnesses, while a Bernoulli
argument on complementary coordinate sets retains the random signs.
The proof uses joint upper-orthant probabilities and applies negative
association only under the original law. Consequences include minoration
for products of Orlicz-based log-concave measures and a covering estimate
in the moment metric, expressed geometrically through polar $L_p$
centroid bodies.
\end{abstract}

\begingroup\small
\noindent\textit{2020 Mathematics Subject Classification.}
Primary 60E15, 60G17; secondary 46B09, 52A23.\par
\smallskip
\noindent\textit{Keywords and phrases.}
Sudakov minoration, log-concave measure, negative association,
common witnesses, Bernoulli process.
\par\endgroup

\section{Introduction}\label{sec:introduction}

For a symmetric random vector $X$ in $\R^d$ and a finite nonempty set
$T\subset\R^d$, write
\[
 \W_X(T)=\E\max_{t\in T}\ip{t}{X},\qquad
 d_p(s,t)=\norm{\ip{t-s}{X}}_p\quad(p\ge1).
\]
Here $\norm{V}_p=(\E|V|^p)^{1/p}$, and all logarithms are natural.
The Sudakov minoration principle asks whether the conditions
$|T|\ge e^p$ and $d_p(s,t)\ge a$ for distinct $s,t\in T$ imply
$\W_X(T)\ge ca$, with a constant independent of $d$, $p$, and $T$.

For Gaussian processes this is the classical entropy lower bound of
Sudakov~\cite{Sudakov}. Talagrand established its Bernoulli counterpart
\cite{TalagrandBernoulli} and obtained minoration for important classes
of canonical processes~\cite{TalagrandCanonical}. Lata\l a~\cite{Latala1997}
treated independent symmetric coordinates with log-concave tails. The
role of regular moment growth in the independent-coordinate setting
was subsequently studied by Lata\l a and Tkocz~\cite{LatalaTkocz}.

The dependent log-concave setting leads to a broader question.
Lata\l a~\cite{LatalaSMP} formulated the conjecture that all log-concave
vectors satisfy Sudakov minoration with a universal constant, proved
several special cases, and established weaker minoration statements
in general. For unconditional vectors, the joint-tail description of
moments~\cite{LatalaMoments} and the sparse support reduction
in~\cite{Bednorz} turn the problem into one of organizing witnesses
on support differences. The common-witness approach was developed
in~\cite[Section~6]{Bednorz}. Minoration for products of radial-type
log-concave measures was studied in~\cite{BednorzRadial}.

Negative association provides a natural dependence assumption for this
problem. In the form introduced by Joag-Dev and Proschan~\cite{JoagDev},
it bounds the covariance of increasing functions of disjoint coordinate
sets. Pilipczuk and Wojtaszczyk~\cite{PilipczukWojtaszczyk} proved
negative association of the coordinate magnitudes for uniform measures
on generalized Orlicz balls. Wojtaszczyk~\cite{Wojtaszczyk} gave a
simpler proof based on localization and extended the class of admissible
measures. Here we assume negative association of the magnitudes and
use it directly to control overlapping witness events.

\subsection{The main result}

A law is \emph{unconditional} if it is invariant under changing any
collection of coordinate signs. We write $[d]=\{1,\ldots,d\}$.

\begin{definition}
A random vector $Y=(Y_i)_{i=1}^d$ has \emph{negatively associated}
(NA) coordinates if, for disjoint $I,J\subset[d]$ and bounded
coordinatewise increasing functions $f$ and $g$,
\[
 \E f(Y_I)g(Y_J)\le \E f(Y_I)\,\E g(Y_J).
\]
For nonnegative increasing functions the same inequality follows by
truncation whenever the expectations involved are finite.
\end{definition}

\begin{theorem}[Sudakov minoration under NA of magnitudes]\label{thm:main}
There exists a universal constant $c>0$ with the following property.
Let $X$ be a unconditional log-concave random vector in $\R^d$ and
assume that $(|X_i|)_{i=1}^d$ has negatively associated coordinates.
For every $P\ge1$, every finite $T\subset\R^d$ with $|T|\ge e^P$, and
every $a>0$ such that
\begin{equation}\label{eq:original-packing}
  \norm{\ip{t-s}{X}}_P\ge a
       \qquad(s,t\in T,\ s\ne t),
\end{equation}
one has
\begin{equation}\label{eq:main-smp}
  \W_X(T)\ge ca.
\end{equation}
The constant is independent of the dimension, the law, $P$, and $T$.
\end{theorem}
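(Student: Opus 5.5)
We follow the route sketched in the abstract. The argument has three layers: a reduction to sparse witnesses, a deterministic selection of common witnesses, and a probabilistic lower bound in which negative association controls overlapping tail events.

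\emph{Step 1: reduction.} By unconditionality we write $X=(\varepsilon_i|X_i|)$ with independent Rademacher signs $\varepsilon$ independent of $|X|$. We then apply Lata\l a's joint-tail description of moments for unconditional log-concave vectors~\cite{LatalaMoments}. It expresses $\norm{\ip{u}{X}}_P$, up to universal constants, as a supremum of $\sum_i u_i x_i$ over points $x$ of the convex downward-closed set
\[
\calK_P=\{x\in\R_+^d:\ \Prob(|X_i|\ge x_i\ \forall i)\ge e^{-P}\}
\]
(or its convex hull). Hence \eqref{eq:original-packing} gives, for each ordered pair $s\ne t$, a witness $x^{s,t}\in\calK_{CP}$ with $\sum_i (t_i-s_i)x^{s,t}_i\ge ca$. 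Following the sparse support reduction of~\cite{Bednorz}, we may pass to a subfamily of size $e^{P/C}$ on which these witnesses live on support differences. This step only loses constants in $P$ and $a$.

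\emph{Step 2: deterministic selection.} Here we invoke the common-witness selection theorem proved earlier in the paper. Pairwise witnesses in a convex downward-closed set yield an exponentially large subfamily $T'\subset T$, with $|T'|\ge e^{P/C}$. For each $t\in T'$ it provides one fixed $x^t\in C\calK_{CP}$ satisfying
\[
\ip{t-s}{x^t}\ge ca\quad\text{for all } s\in T'\setminus\{t\}.
\]
We take this statement as given (prefix counting plus separation in the array space).

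\emph{Step 3: probabilistic lower bound.} For $t\in T'$, let $A_t$ be the joint-tail event $\{|X_i|\ge x^t_i \text{ for } i\in\supp x^t\}$. Log-concavity and the definition of $\calK$ give $\Prob(A_t)\ge e^{-CP}$. The number $N=\sum_t\one_{A_t}$ of active witnesses then satisfies $\E N\ge |T'|e^{-CP}$. With constants arranged so that this is $\ge e^{P/C'}$, choose $P$-levels so that $\E N\ge1$ and $\Prob(N\ge1)\ge c$.

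The main obstacle, which we expect to be hardest, is obtaining $\Prob(\bigcup_t A_t)\ge c$, since the events overlap. We would use a second-moment bound on $N$. The key point is that for $I=\supp x^t\setminus\supp x^s$ and $J$ the common part, NA of $|X|$ under the original law gives
\[
\Prob(A_t\cap A_s)\le \Prob(\text{tail on }I)\,\Prob(\text{tail on }\supp x^s).
\]
Combined with the sparse structure, this makes $\E N^2\lesssim(\E N)^2+\E N$, and Paley--Zygmund finishes.

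Having some active $t$, we condition on $|X|$. On $A_t$, $\ip{t}{X}-\ip{s}{X}$ restricted to coordinates where $t$ and $s$ differ is a Bernoulli sum with coefficients $(t_i-s_i)|X_i|\ge(t_i-s_i)x^t_i$ on the relevant complementary coordinate set. To retain the signs we split coordinates into $\supp x^t$ and its complement. On the complement we use symmetry to say $\E\max$ over the remaining signs is $\ge0$ (Jensen, fixing the chosen $t$). On $\supp x^t$ we use a Bernoulli comparison (contraction) so that the conditional Bernoulli process dominates the one with coefficients $x^t_i$. The chosen index $t$ then attains $\ip{t-s}{X}\ge ca$ with probability $\ge c$ over signs, and averaging yields $\W_X(T)\ge\W_X(T')\ge ca$.
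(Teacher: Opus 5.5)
There is a genuine gap in Step 3, exactly where the random signs must be retained: one active label cannot give a bound at the $L_P$ scale. The active index $t$ is chosen from $|X|$ alone, so $\E_\varepsilon\ip{t}{X}=0$. What a single active label gives is $\W_X(T')\ge\E(\ip{t-s}{X})_+$ for a fixed $s\in T'$. Conditionally on $|X|$ this is the positive part of a Rademacher sum, and it is of the order of the $\ell_2$ norm of the coefficients. On the witness coordinates the coefficients $c_i$ satisfy $\norm{c}_1\gtrsim a$ but may have up to $m$ nonzero entries. So the sum exceeds a fixed fraction of $a$ only with probability exponentially small in $m$, since essentially all signs must align; it does not do so with probability $\ge c$. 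For example, let $X$ be uniform on a cube and take labels $\one_{A_t}$ with supports $|A_t|=m\le P$ and $|A_t\triangle A_s|\ge m$. Then $a\asymp m$, while every conditional increment has $\ell_2$ coefficient norm at most $\sqrt{2m}$.

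The missing idea is to use the maximum over \emph{many simultaneously active} labels, as in \cref{lem:signed-count}. Suppose more than $L_m=2^m\lceil e^{2m}\rceil$ labels are active at $y$. For each reference $t$, pigeonhole the active labels by $A_s\cap A_t$; some class has more than $e^{2m}$ labels. Within that class the vectors $y\one_{A_s\setminus A_t}$ are $2\beta$-separated in $\ell_1$, because $y(A_s\setminus A_r)\ge b_s(A_s\setminus A_r)$ for active $s$. Hence they are $\beta$-separated in the Bernoulli $L_{2m}$ norm. Talagrand's minoration at $q=2m$ then gives $G_t(y)\ge c_B\beta$ for the \emph{fixed} increasing function \eqref{eq:G-definition} of the coordinates outside $A_t$. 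The cardinality $e^{2m}$ is exactly what pays for the $2^{-m}$ sign loss.

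Your second-moment route is also unsupported. The NA bound you state for your tail events, $\Prob(A_t\cap A_s)\le\Prob(\text{tail on }I)\,\Prob(A_s)$, is correct, but it does not give $\E N^2\lesssim(\E N)^2+\E N$. When much of the cost of $x^t$ sits on coordinates shared with other labels, the tail on $I$ is far likelier than the full event. Nothing then prevents active events from clumping, and $\Prob(N\ge1)$ can be small while $\E N$ is large. The paper needs no such statement. NA is used once, as $\E[\one_{B_t}G_t(Y)]\le q_t\E G_t(Y)\le q_tM$, and summing over $t$ gives $\Lambda\le L_m+\Lambda M/(c_B\beta)$, which tolerates clumping.

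Steps~1--2 also misstate the inputs. The reduction of \cref{input:reduction} does not pass to a subfamily of the original labels; it replaces them by binary labels $r\one_{A_t}$ with $|A_t|\le\eta P$. You therefore need the transfer $M_{\rm bin}\le2\W_X(T)+\epsilon a$, the alternative branch, and a separate argument for bounded $P$. \Cref{thm:encoder} requires bodies supported on $A_t$ and the directed hypothesis \eqref{eq:H}. It yields $a_t(A_t\setminus A_s)\ge u/8$ on directed support differences, not $\ip{t-s}{x^t}\ge ca$ for signed, non-sparse labels. That support structure is what makes the witness event depend only on $A_t$ and allows the NA step.

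Finally, with your stated bounds $e^{-CP}$ on the tail events and $|T'|\ge e^{P/C}$, the expected count $\E N$ is exponentially small. The witnesses must be shrunk, by $\theta=p/P$ before selection and by $1/4$ after it, via \eqref{eq:tail-shrink}, to obtain $\Lambda\ge e^{p/4}$.
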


The NA assumption concerns the \emph{magnitudes}; no assertion about
arbitrary negatively associated signed vectors is intended.
Isotropy is not an additional hypothesis. Unconditionality makes the
covariance diagonal, so one may delete zero coordinates and apply positive
diagonal scaling when using isotropic forms of the classical inputs.
Such operations preserve NA of the magnitudes.

\subsection{The proof mechanism}

The central reduced process is
\begin{equation}\label{eq:binary-process}
  Z_t=\sum_{i\in A_t}\varepsilon_iY_i,
  \qquad Y_i\ge0,\quad |A_t|\le m,
\end{equation}
where the $\varepsilon_i$ are independent symmetric signs, independent of
$Y$. The support size $m$ is a small fraction of $\log|T|$.
A moment separation gives, for each pair, a feasible witness on one of
the directed differences $A_t\setminus A_s$ and $A_s\setminus A_t$.
The first problem is to choose witnesses simultaneously.

The encoder construction solves that problem using only convexity and downward closure
of the feasible witness bodies. It gives a retained set $T'$ and vectors
$a_t$, fixed once and for all, with both a controlled joint tail cost and
\[
 a_t(A_t\setminus A_s)\ge c u
       \qquad(t\ne s\text{ in }T').
\]
Here and below $v(D)=\sum_{i\in D}v_i$ for a nonnegative vector $v$.
The vectors $a_t$ may depend on $t$; the result does not assert the
existence of one coordinatewise threshold vector shared by all labels.

The second problem is to control overlapping witness events. If
$B_t=\{Y\ge b_t\}$ and $\Lambda=\sum_t\Prob(B_t)$, the signed counting
lemma below proves
\begin{equation}\label{eq:roadmap-count}
 \E\max_t|Z_t|
 \ge c_B\beta\left(1-\frac{2^m\lceil e^{2m}\rceil}{\Lambda}\right)_+,
\end{equation}
provided $b_t(A_t\setminus A_s)\ge\beta$ for all distinct ordered pairs.
Log-concavity makes $\Lambda$ exponentially large after a fixed shrinking
of the selected witnesses. The small support size makes the numerator in
\eqref{eq:roadmap-count} at most $\Lambda/2$.

\Cref{thm:encoder} gives the deterministic selection, and
\cref{lem:signed-count} gives the signed counting estimate under NA.
The sparse reduction connects these statements to \cref{thm:main}.
The two applications in \cref{sec:consequences} concern Orlicz-based
measures and coverings in the $L_p$ moment metric.

\section{Preliminaries}\label{sec:preliminaries}

\subsection{Signs, upper orthants, and convex witness bodies}

An unconditional vector has the representation in distribution
\begin{equation}\label{eq:sign-representation}
  X\overset{d}= (\varepsilon_iR_i)_{i=1}^d,
  \qquad R=(|X_i|)_{i=1}^d,
\end{equation}
with independent signs independent of $R$. For a nondegenerate
unconditional log-concave density $f$, the density of $R$ on $\R_+^d$
is $2^df$ restricted to that orthant, and is log-concave. The same statement
holds on the coordinate subspace supporting a degenerate unconditional
law. Diagonal coordinate scalings preserve log-concavity; they
also preserve NA when it is assumed.

\begin{lemma}[Joint-tail convexity and shrinking]\label{lem:tail}
Let $Y$ be a log-concave random vector supported on $\R_+^d$, and set
\[
 Q_Y(v)=\Prob(Y\ge v),\qquad v\in\R_+^d,
\]
where inequalities between vectors are coordinatewise. The function
$Q_Y$ is log-concave and upper semicontinuous. In particular,
\begin{equation}\label{eq:tail-shrink}
 Q_Y(\theta v)\ge Q_Y(v)^\theta
       \qquad(0<\theta\le1).
\end{equation}
For $p,u>0$ and $A\subset[d]$, the set
\begin{equation}\label{eq:actual-body}
 K_A(p,u)=\left\{v\ge0:\ \supp v\subset A,\quad
           Q_Y(v)\ge e^{-p},\quad \norm{v}_1\le u\right\}
\end{equation}
is nonempty, compact, convex, and downward closed in the positive orthant.
\end{lemma}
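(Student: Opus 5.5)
The plan is to derive everything from the Prékopa--Leindler inequality applied to the law of $Y$. First we show that $Q_Y$ is log-concave. For $v,w\in\R_+^d$ and $\lambda\in[0,1]$, put $C_v=\{y:\ y\ge v\}$. This is a convex set, and
\[
 \lambda C_v+(1-\lambda)C_w\subset C_{\lambda v+(1-\lambda)w}.
\]
Log-concavity of the law $\mu$ of $Y$ then gives
\[
 \mu(C_{\lambda v+(1-\lambda)w})\ge \mu(C_v)^\lambda\mu(C_w)^{1-\lambda}.
\]
This is the Borell form of Prékopa--Leindler, which holds for log-concave measures, including degenerate ones supported on an affine subspace. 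Hence $Q_Y$ is log-concave on $\R_+^d$.

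Next, upper semicontinuity. Take $v_n\to v$. Then
\[
 \limsup_n \one\{Y\ge v_n\}\le \one\{Y\ge v\}
\]
pointwise, because $Y\ge v_n$ along a subsequence forces $Y\ge v$ in the limit. Reverse Fatou, applied to these indicators bounded by $1$, gives $\limsup_n Q_Y(v_n)\le Q_Y(v)$.

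The shrinking inequality \eqref{eq:tail-shrink} follows from log-concavity applied to the pair $v$ and $0$. Since $Y\in\R_+^d$ almost surely, $Q_Y(0)=1$. Taking $\lambda=\theta$ gives
\[
 Q_Y(\theta v)=Q_Y(\theta v+(1-\theta)0)\ge Q_Y(v)^\theta Q_Y(0)^{1-\theta}=Q_Y(v)^\theta.
\]

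Finally, the properties of $K_A(p,u)$:
\begin{itemize}
\item \emph{Nonempty:} $0\in K_A(p,u)$, because $Q_Y(0)=1\ge e^{-p}$ and $\norm{0}_1=0\le u$.
\item \emph{Downward closed:} $Q_Y$ is coordinatewise nonincreasing, since $w\le v$ implies $\{Y\ge v\}\subset\{Y\ge w\}$. The support and $\ell_1$ constraints also pass to any $0\le w\le v$.
\item \emph{Convex:} the superlevel set $\{Q_Y\ge e^{-p}\}$ of a log-concave function is convex, and the orthant, the coordinate subspace $\{\supp v\subset A\}$ and the $\ell_1$ ball are convex.
\item \emph{Compact:} the set lies in the bounded region $\{v\ge 0,\ \norm{v}_1\le u\}$. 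It is closed because $\{Q_Y\ge e^{-p}\}$ is closed by upper semicontinuity and the remaining constraints are closed.
\end{itemize}

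The only point requiring care is the log-concavity step, namely the inclusion of Minkowski combinations of upper orthants and the validity of the Borell inequality for possibly degenerate log-concave laws supported in $\R_+^d$. Everything else is routine.
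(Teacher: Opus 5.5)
Your proof is correct and follows essentially the same route as the paper. You obtain log-concavity of $Q_Y$ from the Minkowski combination of upper orthants (you use only the inclusion; the paper states equality) together with the Borell/Prékopa--Leindler inequality, derive the shrinking inequality from $Q_Y(0)=1$, and prove upper semicontinuity by reverse Fatou; the properties of $K_A(p,u)$ then follow from closedness, convexity and monotonicity of the tail superlevel set plus the $\ell_1$ cap, and the only cosmetic difference is that you apply Borell's inequality directly to the closed orthants where the paper mentions compact approximation.
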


\begin{proof}
For $v,w\ge0$ and $0<\theta<1$,
\[
 \theta(v+\R_+^d)+(1-\theta)(w+\R_+^d)
   =\theta v+(1-\theta)w+\R_+^d.
\]
Log-concavity of the measure gives log-concavity of $Q_Y$; unbounded
orthants can be treated by compact approximation. Since $Q_Y(0)=1$,
this also proves \eqref{eq:tail-shrink}.
If $v_n\to v$, then
$\limsup_n\one_{\{Y\ge v_n\}}\le\one_{\{Y\ge v\}}$ pointwise.
Reverse Fatou proves upper semicontinuity. The properties of
\eqref{eq:actual-body} now follow from its definition: the tail superlevel
set is closed and convex, decreasing thresholds can only increase its
probability, and the $\ell_1$ cap makes it bounded.
\end{proof}

\subsection{Inputs from the existing theory}

The proof uses the following standard results in the precise forms stated
here. Their proofs are not repeated.

\begin{proposition}[Bernoulli Sudakov minoration]\label{input:bernoulli}
There is a universal $c_B>0$ such that, if $q\ge1$, a finite
$V\subset\R^d$ satisfies $|V|\ge e^q$, and
\[
 \norm{\sum_i(v_i-w_i)\varepsilon_i}_q\ge b
          \qquad(v\ne w\text{ in }V),
\]
then
\[
 \E_\varepsilon\max_{v\in V}\left|\sum_i v_i\varepsilon_i\right|
       \ge c_B b.
\]
\end{proposition}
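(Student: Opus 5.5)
This proposition is Talagrand's Bernoulli minoration in its moment form, which the paper imports as a standard input. I would derive it from Talagrand's Bernoulli theorem \cite{TalagrandBernoulli} (equivalently, the Bednorz--Lata\l a resolution of the Bernoulli conjecture) together with the explicit two-sided description of Rademacher moments. I will use the conventions of \cref{input:bernoulli}: $\|\cdot\|_q$ is taken over the signs, and $V$ is finite.

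\emph{Step 1: translate the moment separation into a geometric one.} Montgomery-Smith's formula, refined by Hitczenko, gives for any $x\in\R^d$
\[
\Bigl\|\sum_i x_i\varepsilon_i\Bigr\|_q\asymp \inf\bigl\{\|y\|_1+\sqrt q\,\|z\|_2:\ x=y+z\bigr\}.
\]
Hence the hypothesis says that for $v\ne w$ the difference $v-w$ does not lie in $c b\,(B_1+\sqrt q B_2)$ for a small universal $c$. Equivalently, the translates $v+\tfrac{c}{2}b(B_1+\sqrt q B_2)$, $v\in V$, are pairwise disjoint.

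\emph{Step 2: invoke Talagrand's Bernoulli Sudakov-type estimate.} In the form from \cite{TalagrandBernoulli}, it states the following. If $\E_\varepsilon\max_{v\in V}|\sum_i v_i\varepsilon_i|\le \delta$, then $V$ can be covered by at most $\exp(C\delta^2/\epsilon^2)$ translates of $\epsilon B_2+C\delta B_1$. The same statement holds for the process $\sup_{v\in V}\sum_i v_i\varepsilon_i$ and its absolute-value version, since $V$ is finite and one may add $0$ or $-V$ at the cost of a factor $2$.

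Suppose, for contradiction, that $\delta<c_Bb$ with $c_B$ small. Take $\epsilon=\delta\sqrt{C'/q}$. Then $V$ is covered by at most $\exp(Cq/C')$ translates of $\delta(\sqrt{C'/q}\,B_2+CB_1)$. Each such translate is contained in $\delta C''\,(B_1+\sqrt q\,B_2)/\sqrt q\cdot\sqrt q$; after adjusting constants it is a subset of a set of diameter less than $cb$ in the norm $\|\cdot\|_{B_1+\sqrt qB_2}$, provided $c_B$ is chosen small relative to $c$ and $C'$. The scaling is the delicate point. One needs $\epsilon\le c b/\sqrt q$ in the $\ell_2$ part and $C\delta\le cb$ in the $\ell_1$ part. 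Both hold because $\delta<c_Bb$ and $\epsilon=\delta\sqrt{C'/q}$.

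By Step 1, each translate then contains at most one point of $V$. So $|V|\le\exp(Cq/C')<e^q$ once $C'>C$, which contradicts $|V|\ge e^q$.

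\emph{Main obstacle.} The substantive difficulty is entirely inside the cited covering theorem. That theorem rests on Talagrand's Bernoulli minoration and its proof via a chaining/decomposition argument, and I would not reprove it here. The only real check within this proposition is the bookkeeping of constants in Step 2: the $\ell_1$ radius $C\delta$ and the $\ell_2$ radius $\epsilon$ must both be small relative to the separation $b$ measured in the Hitczenko norm. A secondary point is that both directions of the moment equivalence in Step 1 are needed only up to universal constants, which the cited formula provides.
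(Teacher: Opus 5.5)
Your argument is correct and is the standard route behind the paper's treatment. The paper does not prove this proposition; it imports it from Talagrand's Bernoulli theorem (see also Lata\l a's Example~1.4). Combining Talagrand's covering form (at most $e^{C\delta^2/\epsilon^2}$ translates of $\epsilon B_2+C\delta B_1$) with a Rademacher moment bound, as you do, is exactly how the moment-separated version follows.

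One slip needs fixing. The set in Step~1 must be $cb\,(B_1+q^{-1/2}B_2)$, which up to a factor $2$ is the unit ball of $\inf\{\norm{y}_1+\sqrt q\,\norm{z}_2:x=y+z\}$, and not $cb\,(B_1+\sqrt q\,B_2)$; your own Step~2 requirement $\epsilon\le cb/\sqrt q$ already uses the correct scaling. With this correction, two points in one covering translate satisfy $d_q\le2\delta(C+C_K\sqrt{C'})<b$. This uses only the elementary direction $\norm{\sum_i x_i\varepsilon_i}_q\le\norm{y}_1+C_K\sqrt q\,\norm{z}_2$ (triangle inequality plus Khintchine with constant $C_K$), so the two-sided Montgomery-Smith--Hitczenko formula is not needed.
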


This is Talagrand's Bernoulli minoration theorem
\cite{TalagrandBernoulli}; see also \cite[Example~1.4]{LatalaSMP}.

\begin{proposition}[A joint-tail witness for a sparse linear form]\label{input:witness}
There is a universal $C_w\ge1$ such that the following holds.
Let $X$ be unconditional and log-concave, $P\ge1$, $|J|\le P$,
and $r_i\ge0$. There is $z\in\R_+^J$ with
\begin{equation}\label{eq:classical-witness}
 \Prob\bigl(|X_i|\ge z_i\text{ for all }i\in J\bigr)\ge e^{-P},
 \qquad
 \sum_{i\in J}r_i z_i
 \ge C_w^{-1}\norm{\sum_{i\in J}r_iX_i}_P.
\end{equation}
\end{proposition}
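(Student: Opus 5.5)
The plan is to pass from the $L_P$ norm of the one-dimensional form $S=\sum_{i\in J}r_iX_i$ to a lower bound on the probability of a single upper orthant. We use the moment comparison for log-concave variables, then a discretisation with only $e^{O(P)}$ patterns (this is where $|J|\le P$ enters), and finally the shrinking inequality \eqref{eq:tail-shrink} of \cref{lem:tail}. Write $R=(|X_i|)_{i\in J}$, whose law is log-concave on $\R_+^J$ by the representation \eqref{eq:sign-representation}, and set $s=\norm{S}_P$. We may assume $s>0$, since otherwise $z=0$ works.

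\textbf{Step 1: a tail bound for $S$.} Since $S$ is a one-dimensional symmetric log-concave variable, Borell's lemma gives $\norm{S}_{2P}\le C_1\norm{S}_P$. Apply Paley--Zygmund to $|S|^P$:
\[
\Prob\bigl(|S|\ge s/2\bigr)\ge (1-2^{-P})^2\frac{(\E|S|^P)^2}{\E|S|^{2P}}\ge \tfrac14 C_1^{-2P}\ge e^{-C_2P}.
\]
Since $|S|\le\sum_{i\in J}r_iR_i$, the event $E=\{\sum_i r_iR_i\ge s/2\}$ has probability at least $e^{-C_2P}$.

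\textbf{Step 2: discretisation and pigeonhole.} Put $k=|J|\le P$, assume $k\ge1$, and let $x_i=2r_iR_i/s$, so that $E=\{\sum_i x_i\ge1\}$. Define integers $n_i=\lfloor 2k x_i\rfloor$. On $E$ we have $\sum_i n_i\ge 2k-k=k$. Choose, measurably on $E$, an integer vector $n'\le n$ with $\sum_i n'_i=k$. The number of possible $n'$ is $\binom{2k-1}{k}\le 4^{k}\le 4^P$. Hence some fixed pattern $n'$ satisfies
\[
\Prob(E\cap\{n'\text{ chosen}\})\ge 4^{-P}e^{-C_2P}.
\]
On that event, $R_i\ge z^0_i:=s\,n'_i/(4k r_i)$ for every $i$ with $r_i>0$; put $z^0_i=0$ when $r_i=0$. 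Consequently
\[
Q_R(z^0)\ge e^{-C_3P},\qquad \sum_i r_iz^0_i=\frac{s}{4k}\sum_i n'_i=s/4 .
\]

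\textbf{Step 3: shrinking.} Set $z=z^0/C_3$. By \eqref{eq:tail-shrink},
\[
\Prob(R\ge z)\ge Q_R(z^0)^{1/C_3}\ge e^{-P},\qquad \sum_i r_iz_i=\frac{s}{4C_3},
\]
which gives \eqref{eq:classical-witness} with $C_w=4C_3$.

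The main obstacle is Step 2: keeping the number of patterns at $e^{O(P)}$ rather than $(\log P)^P$. Naive dyadic rounding of each coordinate fails this count. Quantising to a common grid with total budget $2k$ and selecting a sub-vector of fixed sum $k$ resolves it, via the stars-and-bars bound. This is exactly where the hypothesis $|J|\le P$ is needed.
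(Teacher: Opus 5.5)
Your proof is correct, and it takes a different route from the paper. The paper does not prove \cref{input:witness} at all. It imports it as the sparse case of Lata\l a's joint-tail characterization of moments for unconditional log-concave vectors \cite{LatalaMoments}; see also \cite[Theorem~1]{Bednorz}. That result is a dimension-free, two-sided estimate $\norm{\sum_i r_iX_i}_P\asymp\sup\{\sum_i r_iz_i:\Prob(|X_i|\ge z_i\ \forall i)\ge e^{-P}\}$, valid for arbitrary supports.

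You prove only the direction and the range that are actually needed, by elementary means:
\begin{itemize}
\item Borell's reverse H\"older inequality and Paley--Zygmund give $\Prob(\sum_i r_iR_i\ge s/2)\ge e^{-C_2P}$.
\item Rounding to the grid $s/(4kr_i)$ and extracting a sub-pattern of total mass $k$ covers this event by at most $\binom{2k-1}{k}\le4^P$ upper orthants. Each orthant has $\sum_i r_iz^0_i=s/4$.
\item The shrinking inequality \eqref{eq:tail-shrink} of \cref{lem:tail} converts the orthant probability $e^{-C_3P}$ into $e^{-P}$, at the cost of the constant factor $C_3$.
\end{itemize}

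The details check out. Coordinates with $r_i=0$ are forced to carry a zero pattern. The shrinking step correctly uses log-concavity of the law of $R=(|X_i|)_{i\in J}$, which the paper records after \eqref{eq:sign-representation}.

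What each approach buys:
\begin{itemize}
\item Your pigeonhole loses a factor $4^{|J|}$. It therefore genuinely needs $|J|\lesssim P$ and cannot replace the general characterization.
\item In exchange, it covers the only use of this input in the paper, where $|J|=|A_t\triangle A_s|\le2m\le P$. It also makes this input self-contained, using only tools already present in the paper: the shrinking inequality and scalar log-concave moment comparison.
\end{itemize}
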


This is the sparse-support case of the moment characterization in
\cite{LatalaMoments}; it is also stated in \cite[Theorem 1]{Bednorz}.

\begin{proposition}[Sparse binary reduction]\label{input:reduction}
Fix $\eta,\epsilon>0$. There are constants
$c_{\eta,\epsilon}>0$ and $P_0(\eta,\epsilon)<\infty$ such that, for
$P\ge P_0(\eta,\epsilon)$ and an unconditional log-concave $X$ satisfying
\eqref{eq:original-packing}, either
\begin{equation}\label{eq:easy-reduction-branch}
 \W_X(T)\ge c_{\eta,\epsilon}a,
\end{equation}
or there are $n=\lceil e^{P/4}\rceil$ distinct sets $A_t\subset[d]$
and weights $r_i\ge0$ such that
\begin{align}
 |A_t|&\le\eta P,\label{eq:reduction-support}\\
 \norm{\sum_i r_i(\one_{A_t}(i)-\one_{A_s}(i))X_i}_P
       &\ge a/2\qquad(s\ne t),\label{eq:reduction-packing}\\
 M_{\rm bin}:=\E\max_t\left|\sum_{i\in A_t}r_iX_i\right|
       &\le2\W_X(T)+\epsilon a.\label{eq:reduction-transfer}
\end{align}
\end{proposition}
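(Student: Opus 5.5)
The plan is to follow the sparse-support reduction of \cite{Bednorz}, built from three tools: a truncation of each $t\in T$ at coordinatewise levels, the contraction principle for the signs in \eqref{eq:sign-representation}, and a pigeonhole step that shrinks $T$ from $e^P$ points to $n=\lceil e^{P/4}\rceil$ points. Throughout, $X\overset{d}=(\varepsilon_iR_i)$. Since $T$ has at least two points, symmetry gives $\W_X(T)\ge\frac12\E\max_{t}|\ip{t-t_0}{X}|$ for any fixed $t_0\in T$. I would translate so that $t_0=0$ and work with $\E\max_t|\sum_i t_i\varepsilon_iR_i|$ at the cost of a factor $2$. That factor is the one appearing in \eqref{eq:reduction-transfer}.

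\emph{Step 1 (flat part and easy branch).} Fix a level $\lambda>0$, to be tuned in terms of $a$, $\eta$ and $\epsilon$. Split $t=t^{\flat}+t^{\sharp}$, where $t^{\flat}_i=\operatorname{sign}(t_i)\min(|t_i|,\lambda)$.

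First case: for at least $e^{P/2}$ points, the truncated parts $t^\flat$ are still $a/4$-separated in $\norm{\cdot}_P$. The coordinates of $t^\flat$ are bounded by $\lambda$, so the $P$-th moments of $\ip{t^\flat-s^\flat}{X}$ are governed by Gaussian/Bernoulli-type behaviour. Conditionally on $R$, \cref{input:bernoulli} then gives $\E\max|\ip{t^\flat}{X}|\ge c\,a$. Contraction, using the $1$-Lipschitz map $x\mapsto\operatorname{sign}(x)\min(|x|,\lambda)$, transfers this to $\W_X(T)$, and we land in \eqref{eq:easy-reduction-branch}.

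Second case: a point has more than $\eta P$ coordinates where $|t_i|$ exceeds the quantile level $q_i$ of $|X_i|$ at probability $e^{-1/\eta}$. Then by unconditionality and a simple binomial lower bound, $\ip{t}{X}$ is large with probability bounded below. If this happens for many points we again obtain \eqref{eq:easy-reduction-branch}. Otherwise we discard those points and assume every remaining $t^\sharp$ has support of size at most $\eta P$.

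\emph{Step 2 (common weights and binary sets).} Quantize $|t^\sharp_i|$ dyadically. By pigeonhole on the finitely many level patterns (there are at most $e^{O(\eta P\log(1/\eta))}$ of them once the supports are sparse), keep $n$ points. Then define $r_i$ as the common dyadic level at coordinate $i$, and $A_t=\{i:|t^\sharp_i|\ge r_i\}$. The sets $A_t$ satisfy \eqref{eq:reduction-support} by construction.

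Signs are absorbed by unconditionality. Transfer to binary vectors uses contraction with the coordinatewise $1$-Lipschitz maps $\varphi_i(x)=\min(|x|,r_i)$, which satisfy $\varphi_i(0)=0$. These give $M_{\rm bin}\le\E\max|\ip{t}{X}|$ up to the flat remainder. That remainder is controlled by $\epsilon a$ once $\lambda$ is small, which yields \eqref{eq:reduction-transfer}.

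The packing \eqref{eq:reduction-packing} follows from \eqref{eq:original-packing} by the triangle inequality. The losses are the flat parts, which are non-separated in the second case, and the dyadic rounding, which costs at most a factor $2$ on each coordinate. Distinctness of the $A_t$ follows from \eqref{eq:reduction-packing}.

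\emph{Main obstacle.} The hardest step is Step 1: one has to choose $\lambda$ and $q_i$ so that both alternatives genuinely give $c_{\eta,\epsilon}a$. This requires comparing $\norm{\ip{t^\flat-s^\flat}{X}}_P$ with Bernoulli moments conditionally on $R$. I would first try to use Latała's moment characterization \cite{LatalaMoments}, through \cref{input:witness}, to show that $P$-th moments of bounded-coefficient forms are carried by $\gtrsim P$ coordinates at level $e^{-1}$ quantiles. Talagrand's Bernoulli minoration then applies conditionally on $R$ on a large-probability event. The requirement $P\ge P_0$ enters so that the pigeonhole count $e^{O(\eta P\log(1/\eta))}$ stays below $e^{P/4}$.
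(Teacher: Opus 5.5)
There is a genuine gap. It sits in Step~2, not in the step you flag as the main obstacle.

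Both \eqref{eq:reduction-packing} (by the triangle inequality) and \eqref{eq:reduction-transfer} need an \emph{additively} accurate approximation of the translated labels. That means sparse vectors $\varphi(t)$ with one value and one sign per coordinate, common to all retained labels, such that $\norm{\ip{t-t_0-\varphi(t)}{X}}_P$ is a small multiple of $a$. Dyadic quantization cannot give this.

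\begin{itemize}
\item Its error on coordinate $i$ is proportional to $\abs{t_i-t_{0,i}}$. In the non-easy branch, $\norm{\ip{t-t_0}{X}}_P$ may be far larger than $a$ (up to order $P\,\W_X(T)$, cf.\ \eqref{eq:scalar-growth}). So a factor-$2$ loss per coordinate is not something the triangle inequality can absorb.
\item Replacing every value on coordinate $i$ by the common level $r_i$, and discarding signs through $\min(\abs{x},r_i)$, makes labels separated only on a shared coordinate collapse. Take $t-t_0=xe_1$, $s-t_0=x'e_1$ with $x,x'$ in one dyadic range, or $t-t_0=ye_1$, $s-t_0=-ye_1$. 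Both pairs get $A_t=A_s$, although $\norm{\ip{t-s}{X}}_P$ may exceed $a$.
\item The pigeonhole does not repair this. A class that fixes a level and a sign at each coordinate must record positions in $[d]$, so there are of order $\binom{d}{\eta P}$ classes, growing with the dimension. Your count $e^{O(\eta\log(1/\eta))P}$ holds only for position-free level profiles, which yield no common $r_i$. That count is also small because $\eta$ is small, not because $P\ge P_0$.
\item Your second case has a separate problem. The binomial lower bound needs independence of the events $\{\abs{X_i}\ge q_i\}$. This proposition is stated for every unconditional log-concave $X$, with no NA assumed.
\end{itemize}

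The paper does not rebuild the reduction. It imports exactly this missing structural statement from Bednorz's Proposition~2. After rescaling so that the separation is $P$, either $\W_X(T)\ge c_\delta P$, or a translated subfamily admits approximations $\varphi(t)$ with common nonzero coordinate values, $\abs{\supp\varphi(t)}\le P/(4\log(1/\rho))$ and $\norm{\ip{t-\varphi(t)}{X}}_P\le C(\rho+\delta)P$. The rest is bookkeeping:
\begin{itemize}
\item choose $\rho$ so that the support is at most $\eta P$ and the error at most $P/4$;
\item the triangle inequality then gives \eqref{eq:reduction-packing};
\item an anchor translation plus symmetry gives $2\W_X(T)$, as in your plan;
\item the approximation error becomes an expected-supremum error via $\E\max_t\abs{V_t}\le n^{1/P}\max_t\norm{V_t}_P$ with $n=\lceil e^{P/4}\rceil$.
\end{itemize}

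Your transfer step lacks this last inequality, and a small $\lambda$ does not by itself make the remainder small. After pigeonholing and re-anchoring at a retained point, only the pairwise $d_P$-distances of the flat parts are small. Moreover, the remainder of your contraction by $\min(\abs{x},r_i)$ also contains all below-level sharp coordinates, not just the flat part.

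Your first case, by contrast, is more tractable than you fear, with two changes. Instead of conditioning on $R$, use Jensen in $R$ to get $\W_X(T)\ge\E_\varepsilon\max_t\sum_i t_i\varepsilon_i\E\abs{X_i}$. Then exponential-type upper bounds for $\norm{\ip{v}{X}}_P$ on flat $v$ (Bobkov--Nazarov, in isotropic position) transfer the separation to this Bernoulli process.
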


The construction changes labels and coefficients, not the law of the
magnitudes. This is the quantitative reduction underlying
\cite[Proposition 2 and Corollary 2]{Bednorz}.

For clarity, the parameters in this restatement can be recovered from
the proof of \cite[Proposition~2]{Bednorz}. After rescaling the labels
so that the original separation is $P$, that proof either gives
$\W_X(T)\ge c_\delta P$ or produces a translated subfamily and an
approximation $\varphi(t)$ with common nonzero coordinate values and
\[
 |\supp\varphi(t)|\le\frac{P}{4\log(1/\rho)},\qquad
 \norm{\ip{t-\varphi(t)}{X}}_P\le C(\rho+\delta)P,
\]
where $\rho/\log(1/\rho)=C'\delta$ and the constants are universal.
Choose $\rho$ sufficiently small that the support bound is at most
$\eta P$ and the moment error is at most $P/4$. For a retained family
of size $n=\lceil e^{P/4}\rceil$, the estimate
\[
 \E\max_t|V_t|\le n^{1/P}\max_t\norm{V_t}_P
\]
makes the approximation error at most $\epsilon P$ after a further
decrease of $\rho$. These choices depend only on $\eta$ and
$\epsilon$. The triangle inequality gives
\eqref{eq:reduction-packing}. Translation by an anchor, followed by
symmetry and centering, bounds the expected absolute supremum of the
translated subfamily by $2\W_X(T)$. Rescaling back gives
\eqref{eq:reduction-transfer}. Thus both parameters are fixed before
the reduction is applied.

For bounded values of $P$ we also use the usual scalar log-concave moment
comparison: for a centered log-concave real random variable $V$,
\begin{equation}\label{eq:scalar-growth}
 \norm{V}_P\le C_{\rm reg}P\E|V|\qquad(P\ge1),
\end{equation}
with a universal constant $C_{\rm reg}$; see, for instance,
\cite[Section 2]{LatalaSMP}.

\section{Prefix encoders of distinct sub-supports}\label{sec:encoders}

Fix a finite label set $T$, with $N=|T|\ge2$, and supports
$A_t\subset[d]$ of size at most an integer $m$. Put $p=\log N$.
Fix once and for all an ordering of the ground coordinates.

An \emph{admissible encoder} chooses a complete family of pairwise
distinct sub-supports
\begin{equation}\label{eq:cores}
   C_t\subset A_t\qquad(t\in T).
\end{equation}
The coordinate order need not vary between encoders. The whole family
in \eqref{eq:cores} is part of the encoder; no separate choice of an
encoder is made for each label.

For a fixed encoder $E$, partition labels by $k=|C_t|$. Let $N_k$
be the number in that class, and choose a label uniformly in it.
Write its ordered sub-support as $(i_1,\ldots,i_k)$. If $n_j(t)$ is
the number of labels in the class sharing the first $j$ coordinates of
this word, then $n_0(t)=N_k$ and $n_k(t)=1$. Define
\begin{equation}\label{eq:tau-definition}
 \tau^E_{i_j}(t)=\log\frac{n_{j-1}(t)}{n_j(t)}\quad(1\le j\le k),
 \qquad \tau^E_i(t)=0\quad(i\notin C_t).
\end{equation}
These are the negative logarithms of the successive conditional
probabilities of the prefix code. In particular,
\begin{equation}\label{eq:tau-budget}
 \tau_t^E\ge0,\qquad \supp\tau_t^E\subset C_t,
 \qquad \sum_i\tau_i^E(t)=\log N_k\le p.
\end{equation}
If $k=0$, distinctness gives $N_0=1$ and the information vector is zero.

\begin{lemma}[Counting against any reference support]\label{lem:kraft}
For every admissible encoder and every $S\subset[d]$ with $|S|\le m$,
\begin{equation}\label{eq:kraft}
 \sum_{t\in T}\exp\bigl(-\tau_t^E(A_t\setminus S)\bigr)
       \le B_m,\qquad B_m=(m+1)4^m.
\end{equation}
In particular, $S$ need not be one of the encoded sub-supports.
\end{lemma}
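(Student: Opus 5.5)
We first reduce to a single size class and a single intersection pattern, and then exploit the prefix structure. Since $\tau_t^E$ is supported on $C_t\subset A_t$, we have $\tau_t^E(A_t\setminus S)=\tau_t^E(C_t\setminus S)$. We split $T$ by $k=|C_t|\in\{0,\dots,m\}$; this produces the factor $m+1$. It then suffices to show that, for each fixed $k$,
\[
 \sum_{t:\,|C_t|=k}\exp\bigl(-\tau_t^E(C_t\setminus S)\bigr)\le 4^m .
\]
Within the class, write each label through its ordered word $(i_1,\dots,i_k)$ of $C_t$. We further split the class according to the set of positions $J=\{j:\ i_j\in S\}\subset[k]$. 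There are at most $2^k\le2^m$ such patterns, so it suffices to bound each pattern's contribution by $2^m$.

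Fix the pattern $J$. The exponent is $\sum_{j\notin J}\log\bigl(n_{j-1}(t)/n_j(t)\bigr)$, so each summand equals
\[
 \prod_{j\notin J}\frac{n_j(t)}{n_{j-1}(t)}.
\]
We bound the sum by peeling positions from the end, i.e.\ by induction along the prefix tree. Consider a node at depth $j-1$ carrying $n_{j-1}$ labels, whose children carry counts $n_j^{(1)},n_j^{(2)},\dots$. If $j\notin J$, the children receive weights $n_j^{(c)}/n_{j-1}$, and these sum to $1$ over the children, exactly as in Kraft's inequality. If $j\in J$, the child coordinate $i_j$ is forced to lie in $S$. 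Since the words are ordered increasingly and the coordinates of a word are distinct, the number of admissible children is at most $|S|\le m$. That estimate is too crude, and we refine it in the next step.

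For the refinement, note that a word contains at most $|S|$ letters from $S$, and its letters are increasing. Hence the sequence of $S$-letters chosen at the positions in $J$ is an increasing subsequence of $S$. Summing over all branches, the $S$-choices along a path are therefore encoded by an increasing injection $J\to S$, i.e.\ by a subset of $S$ of size $|J|$. Carrying out the tree induction, the non-$J$ levels contribute total mass at most $1$ for each fixed choice of the $S$-letters, and the number of choices of $S$-letters is at most $\binom{|S|}{|J|}$. This gives the bound $\binom{m}{|J|}$ for the pattern $J$. Summing over patterns $J\subset[k]$ yields $\sum_J\binom{m}{|J|}\le 2^k2^m\le 4^m$. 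Together with the $m+1$ size classes, this gives $B_m$.

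The main obstacle is making the previous step rigorous. After fixing the $S$-letters, the non-$J$ weights must still sum to at most $1$: they are ratios of prefix counts in the full class, not in the subclass compatible with the fixed $S$-letters. The counts in the subclass are smaller, which only helps, since the children at a free level are a subset of all children of that node. I would first formalize this as the claim that, for any subtree, the sum over leaves of the product of ratios at the free levels is at most $1$ times the number of distinct branch choices at the $J$-levels. This claim follows by backward induction on depth, using $\sum_c n_j^{(c)}\le n_{j-1}$ at free levels.
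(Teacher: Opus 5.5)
Your proposal is correct and is essentially the paper's proof. The paper also classifies words by the length $k$, the position set $J$ and the letter set $D\subset S$ (whose placement is fixed by the ordering), and shows each pattern $(k,J,D)$ contributes at most $1$ via a probability experiment that forces the prescribed letters at positions in $J$ and samples from the original prefix conditional probabilities elsewhere, which is exactly your backward induction on the prefix tree. Your count $\sum_{J\subset[k]}\binom{|S|}{|J|}$ is slightly sharper than the paper's $2^k2^{|S|}$, but both yield $(m+1)4^m$.
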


\begin{proof}
Fix a length $k$. Further classify an ordered word $(i_1,\ldots,i_k)$
by the positions $J\subset[k]$ at which its coordinates belong to $S$,
and by the set $D\subset S$ of those coordinates. Necessarily
$|J|=|D|$. The ordering fixes which member of $D$ occupies each position
in $J$.

For one fixed pattern $(k,J,D)$, run the following probability
experiment. At a position in $J$, force the prescribed coordinate of $D$.
At any other position, sample according to the original prefix
conditional probabilities in the size-$k$ class. On a prefix absent
from the original tree, define arbitrary transition probabilities;
such a prefix cannot be extended into a valid word of that tree.

If a valid label $t$ has this pattern, the probability that this
experiment produces its word is
\[
 \prod_{j\notin J}\frac{n_j(t)}{n_{j-1}(t)}
   =\exp\bigl(-\tau_t^E(C_t\setminus S)\bigr).
\]
The accepted words are distinct, so their probabilities sum to at most
one. There are at most $2^k2^{|S|}$ choices of $(J,D)$ for each $k$,
and at most $m+1$ possible lengths. Summing over all patterns gives
$(m+1)4^m$. Finally, $\tau_t^E$ vanishes off $C_t\subset A_t$, so
$\tau_t^E(C_t\setminus S)=\tau_t^E(A_t\setminus S)$.
\end{proof}

\begin{remark}
The reference set in \cref{lem:kraft} may be the original $A_s$ even
though the encoder uses a smaller $C_s$. This is what allows different
complete sub-support families to be averaged while retaining one fixed
counting inequality. The factor $m+1$ accounts for the possible size classes.
\end{remark}

\section{Simultaneous selection of witnesses}\label{sec:selection}

The next result is deterministic. In particular, it does not use NA.
For a compact set $K\subset\R_+^d$, write
\[
 h_K(w)=\max_{v\in K}\ip{w}{v}.
\]
Downward closed means that $0\le z\le v\in K$ implies $z\in K$.

\begin{theorem}[Common distribution of encoders]\label{thm:encoder}
Let $N=|T|\ge2$, $p=\log N$, and $|A_t|\le m$.
Let $K_t\subset\R_+^d$ be nonempty compact convex downward-closed
sets supported on $A_t$. Suppose $u>0$ and
\begin{equation*}
 \max\left\{
 h_{K_t}(\one_{A_t\setminus A_s}),
 h_{K_s}(\one_{A_s\setminus A_t})
 \right\}\ge u\qquad(s\ne t).
 \tag{H}\label{eq:H}
\end{equation*}
There is one probability law $\nu$ on admissible encoders such that
\begin{equation}\label{eq:common-law}
 a_t:=\frac{u}{2p}\E_\nu\tau_t^E\in K_t
                  \qquad\text{for every }t\in T.
\end{equation}
These vectors satisfy, for every $s\in T$,
\begin{equation}\label{eq:exponential-selection}
 \sum_{t\in T}
  \exp\left(-\frac{2p}{u}a_t(A_t\setminus A_s)\right)
       \le B_m.
\end{equation}
\end{theorem}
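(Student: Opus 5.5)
The plan is to prove \eqref{eq:common-law} by convex separation in the space of arrays $(v_t)_{t\in T}\in(\R^d)^T$, reducing it to a statement about one encoder at a time. I would then deduce \eqref{eq:exponential-selection} from \eqref{eq:common-law} using Jensen and \cref{lem:kraft}.

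\emph{Reduction to a dual statement.} There are finitely many admissible encoders. Hence the set
\[
\mathcal A=\Bigl\{\bigl(\tfrac{u}{2p}\E_\nu\tau_t^E\bigr)_{t\in T}:\ \nu\text{ a law on encoders}\Bigr\}
\]
is the convex hull of finitely many arrays, so it is compact and convex. The product $\mathcal K=\prod_t K_t$ is compact, convex and downward closed. Suppose $\mathcal A\cap\mathcal K=\emptyset$. Strict separation then gives an array $w=(w_t)_t$ with
\[
\tfrac{u}{2p}\sum_t\ip{w_t}{\tau_t^E}>\sum_t h_{K_t}(w_t)\qquad\text{for every encoder }E.
\]
We may replace $w_t$ by its positive part $(w_t)_+$. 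The left side can only increase, since $\tau_t^E\ge0$. The right side is unchanged: for $v\in K_t$, downward closure puts $v\one_{\{w_t>0\}}$ in $K_t$, so $h_{K_t}(w_t)=h_{K_t}((w_t)_+)$. So it suffices to prove the following: for every $w\ge0$ there is an encoder $E$ with $\frac{u}{2p}\sum_t\ip{w_t}{\tau^E_t}\le\sum_t h_{K_t}(w_t)$.

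\emph{Construction of the encoder, the main step.} Given $w\ge0$, write $h_t=h_{K_t}(w_t)$ and set
\[
C_t=\{i\in A_t:\ w_{t,i}\le 2h_t/u\}.
\]
By \eqref{eq:tau-budget}, $\ip{w_t}{\tau_t^E}\le p\max_{i\in C_t}w_{t,i}\le 2ph_t/u$. This gives the required inequality term by term.

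The real point is that these $C_t$ are pairwise distinct, and this is exactly where \eqref{eq:H} enters. Suppose $C_t=C_s$ for some $s\ne t$. By \eqref{eq:H}, say $h_{K_t}(\one_{A_t\setminus A_s})\ge u$ (the other case is symmetric). Then there is $v\in K_t$ with $v(A_t\setminus A_s)\ge u>0$. Each $i\in A_t\setminus A_s$ lies outside $A_s\supset C_s=C_t$, so $i\notin C_t$, which means $w_{t,i}>2h_t/u$. Since $v$ has positive mass on this set, we get $\ip{w_t}{v}>(2h_t/u)\,v(A_t\setminus A_s)\ge 2h_t$. If $h_t>0$ this contradicts $\ip{w_t}{v}\le h_t$. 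If $h_t=0$ it contradicts $\ip{w_t}{v}\le0$.

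So the family $(C_t)$ is admissible, the dual statement holds, and hence $\mathcal A\cap\mathcal K\ne\emptyset$. Any point of the intersection gives the law $\nu$ in \eqref{eq:common-law}. The obstacle I expect here is simply choosing the right thresholded sub-supports. The factor $2$ is slack that makes the distinctness argument strict.

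\emph{The counting bound.} Fix $s$. By \eqref{eq:common-law}, $\frac{2p}{u}a_t(A_t\setminus A_s)=\E_\nu\tau_t^E(A_t\setminus A_s)$. By convexity of $\exp$ (Jensen),
\[
\sum_t\exp\bigl(-\E_\nu\tau_t^E(A_t\setminus A_s)\bigr)\le\E_\nu\sum_t\exp\bigl(-\tau_t^E(A_t\setminus A_s)\bigr).
\]
Apply \cref{lem:kraft} with $S=A_s$, which is legitimate since $|A_s|\le m$, to each encoder in the support of $\nu$. This bounds the right side by $B_m$ and proves \eqref{eq:exponential-selection}.
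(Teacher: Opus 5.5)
Your proof is correct and follows the paper's argument essentially step for step. The shared steps are: strict separation of $\prod_t K_t$ from the convex hull of scaled encoder arrays in the product space of all labels, reduction to nonnegative costs via downward closure, the thresholded cores $C_t=\{i\in A_t: w_{t,i}\le 2h_{K_t}(w_t)/u\}$ with the budget bound from \eqref{eq:tau-budget}, and Jensen together with \cref{lem:kraft} at $S=A_s$. The only cosmetic difference is that you prove distinctness of the cores by contradiction, whereas the paper shows directly that the witness $v$ from \eqref{eq:H} has $v(C_t\setminus A_s)\ge u/2>0$, so $C_t\not\subset A_s\supset C_s$.
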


\begin{proof}
Condition \eqref{eq:H} forces the original $A_t$ to be distinct, so
the choice $C_t=A_t$ is admissible. There are finitely many admissible
families of sub-supports. Let
\[
 \calC=\conv\left\{(\tau_t^E)_{t\in T}:E\text{ admissible}\right\},
 \qquad \calK=\prod_{t\in T}K_t.
\]
Both are compact convex sets in the finite-dimensional space of
label-coordinate arrays. We prove
\begin{equation}\label{eq:intersection}
       \frac{u}{2p}\calC\cap\calK\ne\varnothing.
\end{equation}

Fix nonnegative costs $w_t$ on $A_t$, and put
\begin{equation}\label{eq:cheap-cores}
 H_t=h_{K_t}(w_t),\qquad
 C_t=\left\{i\in A_t:w_{t,i}\le\frac{2H_t}{u}\right\}.
\end{equation}
We first check that the $C_t$ are distinct. For a pair $s\ne t$,
condition \eqref{eq:H}, downward closure, and scaling down give either
a vector $v\in K_t$ of mass exactly $u$ supported on
$A_t\setminus A_s$, or the corresponding vector at endpoint $s$.
Consider the first case. Since $\ip{w_t}{v}\le H_t$, the mass of $v$
outside $C_t$ is at most $u/2$ when $H_t>0$. If $H_t=0$, that mass
is zero. Hence
\[
   v(C_t\setminus A_s)\ge u/2>0.
\]
Thus $C_t\not\subset A_s$, whereas $C_s\subset A_s$, and $C_t\ne C_s$.
The other endpoint gives the same conclusion.

The family in \eqref{eq:cheap-cores} therefore defines an admissible
encoder $E$. Every coordinate used by label $t$ has cost at most
$2H_t/u$. By \eqref{eq:tau-budget},
\[
 \ip{w_t}{\tau_t^E}\le\frac{2H_t}{u}\sum_i\tau_i^E(t)
                         \le\frac{2p}{u}H_t.
\]
It follows that, for every nonnegative cost array,
\begin{equation}\label{eq:dual-certificate}
 \frac{u}{2p}\min_{\tau\in\calC}
     \sum_t\ip{w_t}{\tau_t}
       \le\sum_t h_{K_t}(w_t)=h_{\calK}(w).
\end{equation}

If \eqref{eq:intersection} failed, strict separation of the two compact
convex sets would give a linear functional $w$ with
\[
 \inf_{z\in(u/(2p))\calC}\ip{w}{z}>h_{\calK}(w).
\]
All encoder arrays are nonnegative. Write $w_+$ for the
coordinatewise positive part of $w$. Downward closure of $\calK$ gives
$h_{\calK}(w)=h_{\calK}(w_+)$: an optimizer for $w_+$ may have all
coordinates with negative $w$ set to zero. Replacing $w$ by $w_+$ can
only increase the infimum on the left. This contradicts
\eqref{eq:dual-certificate} and proves \eqref{eq:intersection}.

A point of $\calC$ is the average of whole encoder arrays under one
finite probability law $\nu$. Thus \eqref{eq:intersection} is precisely
\eqref{eq:common-law}. Finally, Jensen's inequality and
\cref{lem:kraft}, used with $S=A_s$, give
\begin{align*}
 \sum_t e^{-(2p/u)a_t(A_t\setminus A_s)}
 &=\sum_t e^{-\E_\nu\tau_t^E(A_t\setminus A_s)}\\
 &\le\E_\nu\sum_t e^{-\tau_t^E(A_t\setminus A_s)}
 \le B_m.
\end{align*}
\end{proof}

\begin{remark}
The cost array in the separation proof may select a different complete
family $(C_t)_t$. The conclusion is nevertheless one common mixing law
on complete families, because the separation takes place in the product
space of \emph{all} labels. Applying a separate minimax argument for
each label would not give this conclusion.
\end{remark}

\begin{corollary}[An exponential subfamily with fixed witnesses]\label{cor:selection}
Under the hypotheses of \cref{thm:encoder}, there is $T'\subset T$ with
\begin{equation}\label{eq:selection-size}
 |T'|\ge\frac{e^p}{1+2B_m e^{p/4}}
\end{equation}
such that the vectors in \eqref{eq:common-law} satisfy
\begin{equation}\label{eq:ordered-gains}
 a_t(A_t\setminus A_s)\ge u/8
             \qquad(t\ne s\text{ in }T').
\end{equation}
If $m\ge1$ and $m\le p/64$, then $|T'|\ge e^{p/2}$.
\end{corollary}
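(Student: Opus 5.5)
The plan is a standard deletion (Turán-type) argument built on \eqref{eq:exponential-selection}. Call an ordered pair $(t,s)$ with $t\ne s$ \emph{bad} if $a_t(A_t\setminus A_s)<u/8$. For such a pair,
\[
 \frac{2p}{u}a_t(A_t\setminus A_s)<\frac{p}{4},
 \qquad\text{so}\qquad
 \exp\Bigl(-\frac{2p}{u}a_t(A_t\setminus A_s)\Bigr)>e^{-p/4}.
\]
Fix $s$ and apply \eqref{eq:exponential-selection}. The number of labels $t$ for which $(t,s)$ is bad is then less than $B_me^{p/4}$. Summing over $s$, the total number of bad ordered pairs is at most $NB_me^{p/4}$.

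Next I will form an undirected graph $G$ on $T$, joining $s$ and $t$ if either $(t,s)$ or $(s,t)$ is bad. Its number of edges is at most the number of bad ordered pairs, namely $NB_me^{p/4}$. The average degree is therefore $\bar d\le 2B_me^{p/4}$.

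Turán's theorem, in the Caro--Wei form, gives an independent set $T'$ with
\[
 |T'|\ge\frac{N}{1+\bar d}\ge\frac{e^p}{1+2B_me^{p/4}}.
\]
This is \eqref{eq:selection-size}. Independence means no ordered pair inside $T'$ is bad, which is exactly \eqref{eq:ordered-gains}. If one prefers to avoid citing Caro--Wei, the same bound follows from the greedy argument: repeatedly keep a vertex of minimum degree and delete its neighbours. Alternatively, Jensen applied to $\sum_v 1/(1+\deg v)$ gives it directly.

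For the final claim I need $1+2B_me^{p/4}\le e^{p/2}$. With $m\ge1$ we have $B_m=(m+1)4^m\le 8^m$, and then
\[
 1+2B_me^{p/4}\le 3\cdot 8^m e^{p/4}\le e^{p/2}
\]
as soon as $\log 3+m\log 8\le p/4$. Since $m\le p/64$ and $m\ge1$, we have $p\ge 64$, and
\[
 m\log 8\le \frac{p\log 8}{64}\le 0.033\,p,
 \qquad
 \log 3\le 0.018\,p,
\]
both comfortably below $p/8$. Hence the claim holds.

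The only step needing care is this last elementary arithmetic, together with the degenerate case where some $a_t$ vanish. That case is harmless: such a pair is simply counted as bad, and the counting bound already handles it.
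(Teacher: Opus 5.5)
Your proposal is correct and follows essentially the same route as the paper's proof. Both arguments count the violators for each fixed $s$ via \eqref{eq:exponential-selection} and form the undirected conflict graph with average degree at most $2B_me^{p/4}$. Both then take a Caro--Wei/Turán independent set and finish with the same arithmetic, using $B_m\le 8^m$ and $p\ge 64$.
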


\begin{proof}
For a fixed $s$, \eqref{eq:exponential-selection} shows that at most
$D=B_me^{p/4}$ labels $t$ violate the inequality in
\eqref{eq:ordered-gains}. Join two distinct labels by an undirected
edge if either ordered inequality fails. This graph has at most $ND$
edges, so its average degree is at most $2D$. It has an independent set
of size at least $N/(1+2D)$. For completeness, a random ordering of
vertices retains those preceding all their neighbors, giving expected
size $\sum_t(1+\deg t)^{-1}\ge N/(1+2D)$.

For the last assertion, $m+1\le2^m$ gives $B_m\le8^m$.
Since $D\ge1$ and $p\ge64$,
\[
 |T'|\ge\frac13\exp\left(\frac{3p}{4}-m\log8\right)
 \ge\frac13\exp\left(\left(\frac34-\frac{\log8}{64}\right)p\right)
 \ge e^{p/2}.
\]
\end{proof}

For the bodies $K_t=K_{A_t}(p,u)$ from \eqref{eq:actual-body}, membership
$a_t\in K_t$ is the actual probability bound
$\Prob(Y\ge a_t)\ge e^{-p}$. No product of marginal probabilities
has replaced this cost at any stage.

\section{Negative association and active witness counts}\label{sec:counting}

Throughout this section $Y\ge0$ is NA, the family $T$ is finite with
$|T|\ge2$, and
the deterministic vector $b_t\ge0$ is supported on $A_t$. Assume
\begin{equation}\label{eq:b-separation}
 b_t(A_t\setminus A_s)\ge\beta>0
                      \qquad(t\ne s).
\end{equation}
Define
\begin{equation}\label{eq:active-count}
 B_t=\{Y\ge b_t\},\qquad q_t=\Prob(B_t),\qquad
 \calN(Y)=\sum_t\one_{B_t},\qquad \Lambda=\sum_t q_t.
\end{equation}
We assume $\Lambda>0$ and write $x_+=\max\{x,0\}$.
Log-concavity is not needed for the counting results in this section.

\subsection{The positive counting estimate}

\begin{proposition}\label{prop:positive-count}
Let $F(Y)=\max_t\sum_{i\in A_t}Y_i$. Then
\begin{equation}\label{eq:positive-count}
 \E F(Y)\ge\beta(1-\Lambda^{-1})_+.
\end{equation}
\end{proposition}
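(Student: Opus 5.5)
The plan is to reduce \eqref{eq:positive-count} to a lower bound on the probability that at least one witness event is active, and then to estimate that probability by a second-moment argument in which NA controls the overlaps. There is no loss in assuming $\Lambda>1$, since otherwise the right-hand side vanishes.

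\emph{Deterministic step.} Since $|T|\ge2$, every $t$ has some $s\ne t$. On $B_t$ we have $Y\ge b_t$ on $A_t$, and $Y\ge0$, so
\[
 \sum_{i\in A_t}Y_i\ge b_t(A_t)\ge b_t(A_t\setminus A_s)\ge\beta
\]
by \eqref{eq:b-separation}. Hence $F(Y)\ge\beta\,\one_{\{\calN(Y)\ge1\}}$, and
\[
 \E F(Y)\ge\beta\,\Prob(\calN\ge1).
\]
It therefore suffices to prove $\Prob(\calN=0)\le\Lambda^{-1}$.

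\emph{Second moment.} By Cauchy--Schwarz,
\[
 \Prob(\calN\ge1)\ge\frac{(\E\calN)^2}{\E\calN^2}
 =\frac{\Lambda^2}{\Lambda+\sum_{t\ne s}\Prob(B_t\cap B_s)}.
\]
If $\sum_{t\ne s}\Prob(B_t\cap B_s)\le\Lambda^2$, this gives $\Prob(\calN\ge1)\ge(1+\Lambda^{-1})^{-1}\ge1-\Lambda^{-1}$, as required.

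\emph{Bounding the overlaps (main obstacle).} For a fixed ordered pair $(s,t)$, write
\[
 B_t\cap B_s=\{Y_{A_s}\ge b_s\}\cap\{Y_{A_t\setminus A_s}\ge b_t\}\cap\{Y_{A_t\cap A_s}\ge b_t\}.
\]
I would first drop the last event and apply NA to the two indicators, which are increasing functions of the disjoint blocks $A_s$ and $A_t\setminus A_s$. This gives
\[
 \Prob(B_t\cap B_s)\le q_s\,r_{ts},\qquad r_{ts}:=\Prob\bigl(Y_{A_t\setminus A_s}\ge b_t\bigr).
\]
The difficulty is that $r_{ts}\ge q_t$, so this bound alone does not yield $\Lambda^2$.

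The point at which I expect the argument to need real work is therefore the following. If summing $\Prob(B_t\cap B_s)$ directly fails, I would replace $\calN$ in the second-moment step by a reweighted count that charges each active pair to its disjoint part. An alternative is a ``first active label in a random order'' argument, which only requires $\Prob(\calN=0)$ to be compared with $\Lambda^{-1}$ through NA applied to decreasing indicators of the complements $B_t^c$ on the disjoint blocks $A_t\setminus A_s$. In either variant, the separation $b_t(A_t\setminus A_s)\ge\beta$ is what makes the disjoint part of the witness alone enough to force $F\ge\beta$. For this reason I would carry out the counting entirely on the sets $A_t\setminus A_s$, where NA applies in the correct direction.
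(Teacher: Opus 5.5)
\textbf{The gap.} Your reduction to a lower bound on $\Prob(\calN\ge1)$ is where the argument fails. The target $\Prob(\calN=0)\le\Lambda^{-1}$ is false under the hypotheses, so no second-moment, reweighting, or random-order argument can supply it. Overlaps $A_t\cap A_s$ can make the increasing events $B_t$ strongly positively correlated, and NA gives nothing for events that share coordinates.

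Here is a concrete counterexample. Take independent coordinates, which are therefore NA, with $\Prob(Y_{i_0}\ge1)=\delta$ and $Y_i\ge1$ almost surely for $i\ne i_0$. Let $A_t=\{i_0,i_t\}$ with distinct $i_t$, and $b_t=\one_{\{i_0,i_t\}}$. Then $A_t\setminus A_s=\{i_t\}$, so \eqref{eq:b-separation} holds with $\beta=1$. But every $B_t$ equals $\{Y_{i_0}\ge1\}$. Hence $\Lambda=N\delta$ while $\Prob(\calN=0)=1-\delta$, which is far larger than $\Lambda^{-1}$ when $\delta$ is small and $N\delta$ is large. Correspondingly, $\sum_{t\ne s}\Prob(B_t\cap B_s)=N(N-1)\delta$, which is much larger than $\Lambda^2$. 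The proposition survives only because $F\ge Y_{i_0}+1\ge\beta$ even when no $B_t$ occurs. Your step $F\ge\beta\one_{\{\calN\ge1\}}$ discards exactly this.

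\textbf{The missing idea.} Bound $\E F$ directly rather than an activity probability, and apply NA only across $A_t$ and its complement. For fixed $t$, set $F_t(Y)=\max_{s\ne t}\sum_{i\in A_s\setminus A_t}Y_i$. This function is increasing, depends only on $Y_{A_t^c}$, and satisfies $F_t\le F$, so NA gives $\E[\one_{B_t}F_t]\le q_t\E F$. On $B_t\cap\{\calN\ge2\}$ some other $B_s$ occurs, and then $F_t\ge b_s(A_s\setminus A_t)\ge\beta$. Summing over $t$ yields
\[
\Lambda\E F\ge\beta\E[\calN\one_{\{\calN\ge2\}}]=\beta\bigl(\Lambda-\Prob(\calN=1)\bigr)\ge\beta(\Lambda-1).
\]
Your instinct that the disjoint parts carry the separation is right. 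However, it must be used for a \emph{second} active label $s$, on $A_s\setminus A_t$, paired with $B_t$ through NA. It cannot be used to force $\calN\ge1$ with high probability.
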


\begin{proof}
For a fixed $t$, put
\[
 F_t(Y)=\max_{s\ne t}\sum_{i\in A_s\setminus A_t}Y_i.
\]
This is increasing and depends only on $A_t^c=[d]\setminus A_t$,
while $B_t$ is
increasing and depends only on $A_t$. Also $F_t\le F$. Thus NA gives
\[
 \E[\one_{B_t}F_t]\le q_t\E F_t\le q_t\E F.
\]
If $B_t$ occurs and $\calN\ge2$, some other $B_s$ occurs, and
$F_t\ge b_s(A_s\setminus A_t)\ge\beta$. Summing in $t$,
\[
 \Lambda\E F\ge\beta\E[\calN\one_{\{\calN\ge2\}}]
   =\beta\bigl(\Lambda-\Prob(\calN=1)\bigr)
   \ge\beta(\Lambda-1).
\]
The assertion follows, with the case $\E F=\infty$ understood trivially.
\end{proof}

\subsection{The signed counting estimate}

\begin{lemma}[Signed counting under NA]\label{lem:signed-count}
Assume \eqref{eq:b-separation}, $|A_t|\le m$, and $m\ge1$. Let
\[
 M=\E_{Y,\varepsilon}\max_t
       \left|\sum_{i\in A_t}\varepsilon_iY_i\right|,
 \qquad L_m=2^m\lceil e^{2m}\rceil,
\]
where the signs are independent of $Y$. With the constant $c_B$ in
\cref{input:bernoulli},
\begin{equation}\label{eq:signed-count}
 M\ge c_B\beta\left(1-\frac{L_m}{\Lambda}\right)_+.
\end{equation}
\end{lemma}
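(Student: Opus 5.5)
The plan is to imitate the proof of \cref{prop:positive-count}, replacing the positive functional $F_t$ by a conditional Bernoulli supremum over the complement of $A_t$. For $y\in\R_+^d$ and each label $t$ I would set
\[
 \Phi_t(y)=\E_\varepsilon\max_{s\in T}\Bigl|\sum_{i\in A_s\setminus A_t}\varepsilon_iy_i\Bigr|.
\]
This function depends only on $y_{A_t^c}$. It is coordinatewise increasing in $y\ge0$, because $y_i\mapsto\E_\varepsilon\max_s|\cdots|$ is convex and even in each $y_i$ once the other coordinates are fixed. Moreover $\E\Phi_t(Y)\le M$. To see this, condition on $(\varepsilon_i)_{i\notin A_t}$ and apply Jensen in the signs $(\varepsilon_i)_{i\in A_t}$, which have mean zero: adding the $A_t$-part inside each absolute value can only increase the expected maximum. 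Since $B_t$ is increasing and depends only on $Y_{A_t}$, NA gives $\E[\one_{B_t}\Phi_t(Y)]\le q_t\E\Phi_t(Y)\le q_tM$. Summing over $t$ yields
\[
 \Lambda M\ge\E\sum_t\one_{B_t}\Phi_t(Y).
\]

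It then suffices to show $\Phi_t(Y)\ge c\,c_B\beta$ whenever $B_t$ occurs and $\calN(Y)>L_m$. I would conclude with
\[
 \Lambda M\ge c\,c_B\beta\,\E[\calN\one_{\{\calN>L_m\}}]\ge c\,c_B\beta(\Lambda-L_m),
\]
using $\E[\calN\one_{\{\calN\le L_m\}}]\le L_m$. To prove the pointwise bound, fix $Y$ in this event. At least $L_m$ other labels $s$ are active. Labels sharing the same trace $A_s\setminus A_t$ differ only inside $A_t$, and $|A_t|\le m$, so at most $2^m$ of them share a trace. Hence there are at least $\lceil e^{2m}\rceil$ active labels with pairwise distinct vectors $v_s=(Y_i\one_{A_s\setminus A_t}(i))_i$. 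I would apply \cref{input:bernoulli} to this family with $q=2m$. The difference $v_s-v_{s'}$ has support of size $k\le2m$, and with probability $2^{-k}$ all of its signs align. This gives
\[
 \Bigl\|\sum_i(v_{s,i}-v_{s',i})\varepsilon_i\Bigr\|_{2m}\ge2^{-k/(2m)}\|v_s-v_{s'}\|_1\ge\tfrac12\|v_s-v_{s'}\|_1,
\]
so it remains to bound $\|v_s-v_{s'}\|_1$ from below by a constant times $\beta$.

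The main obstacle is this $\ell_1$ lower bound after deleting $A_t$. Active labels satisfy $Y\ge b_s$, so $Y(A_s\setminus A_{s'})\ge b_s(A_s\setminus A_{s'})\ge\beta$. However, part of $A_s\setminus A_{s'}$ may lie inside $A_t$ and be lost, and then the separation between $v_s$ and $v_{s'}$ can degenerate. My first attempt would be a finer classification than the trace: group the active labels by the pair $(A_s\cap A_t,\ A_s\setminus A_t)$. This classification costs the factor $2^m$ and leaves $\lceil e^{2m}\rceil$ labels with the same intersection $D=A_s\cap A_t$. For two such labels $A_s\setminus A_{s'}\subset A_t^c$, so the full separation $\beta$ survives in $v_s-v_{s'}$. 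This restricts $\Phi_t$'s maximum to a subfamily, which only decreases it, so the bound still holds. If pigeonholing does not produce such a subfamily, I would instead put $\Phi_t$ on the complement of the common part only. The final constant may then be $c_B/2$ rather than $c_B$, which I would absorb by a harmless rescaling of the constant in \eqref{eq:signed-count}.
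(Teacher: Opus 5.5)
Your proposal is correct and follows essentially the same route as the paper. It uses the same function (your $\Phi_t$ is the paper's $G_t$), the same single NA application between $A_t$ and $A_t^c$, and, in your final version, the same pigeonholing of active labels by $A_s\cap A_t$ into at most $2^m$ classes followed by Bernoulli minoration at $q=2m$. Your hedges are unnecessary: classifying by $A_s\cap A_t$ alone suffices (the pair you mention determines $A_s$ itself). Since both directed separations hold, $\|v_s-v_{s'}\|_1=Y(A_s\triangle A_{s'})\ge2\beta$, which gives exactly the constant $c_B$ with no rescaling.
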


\begin{proof}
We may assume $M<\infty$. For each fixed reference label $t$, define
\begin{equation}\label{eq:G-definition}
 G_t(y)=\E_\varepsilon\max_{s\in T}
       \left|\sum_{i\in A_s\setminus A_t}\varepsilon_i y_i\right|.
\end{equation}
As a function of one coordinate, this expectation is convex and even,
and hence nondecreasing on $[0,\infty)$. Thus $G_t$ is increasing on
the positive orthant and depends only on coordinates outside $A_t$.
Moreover, averaging first over the signs in $A_t$ and using convexity
of the maximum of absolute values shows, for every $y\ge0$, that
\[
 G_t(y)\le\E_\varepsilon\max_s
               \left|\sum_{i\in A_s}\varepsilon_i y_i\right|.
\]
Consequently, NA gives
\begin{equation}\label{eq:NA-G}
 \E[\one_{B_t}G_t(Y)]\le q_t\E G_t(Y)\le q_tM.
\end{equation}
For an unbounded $G_t$, apply NA to $G_t\wedge R$ and let $R\to\infty$.

We claim that
\begin{equation}\label{eq:many-active}
 \calN(y)>L_m\quad\Longrightarrow\quad
       G_t(y)\ge c_B\beta\quad\text{for every }t\in T.
\end{equation}
Fix such $y$ and a reference $t$. Partition the active labels
$\{s:y\in B_s\}$ according to $A_s\cap A_t$. There are at most
$2^{|A_t|}\le2^m$ classes. Some class $\mathcal S$ therefore has
more than $e^{2m}$ labels.

For $s\in\mathcal S$ define the deterministic vector
$v_s=y\one_{A_s\setminus A_t}$. If $r,s\in\mathcal S$ are distinct,
their supports agree inside $A_t$, so
\begin{align}
 \norm{v_s-v_r}_1
 &=y(A_s\setminus A_r)+y(A_r\setminus A_s)\notag\\
 &\ge b_s(A_s\setminus A_r)+b_r(A_r\setminus A_s)
 \ge2\beta.\label{eq:active-l1}
\end{align}
The difference has at most $2m$ nonzero coordinates. The event on which
every relevant Rademacher sign agrees with its coefficient therefore
gives
\begin{equation}\label{eq:bernoulli-sparse-moment}
 \norm{\sum_i(v_s-v_r)_i\varepsilon_i}_{2m}
 \ge2^{-\abs{\supp(v_s-v_r)}/(2m)}\norm{v_s-v_r}_1
 \ge\beta.
\end{equation}
The vectors are distinct by \eqref{eq:active-l1}, and
$|\mathcal S|>e^{2m}$. Applying \cref{input:bernoulli} at $q=2m$
proves \eqref{eq:many-active}.

Now sum \eqref{eq:NA-G}. By \eqref{eq:many-active},
\begin{align}
 \Lambda=\E\calN
 &\le L_m+\E[\calN\one_{\{\calN>L_m\}}]\notag\\
 &\le L_m+\frac1{c_B\beta}\sum_t
                 \E[\one_{B_t}G_t(Y)]\notag\\
 &\le L_m+\frac{M}{c_B\beta}\Lambda.\label{eq:count-master}
\end{align}
Rearrangement gives \eqref{eq:signed-count}.
\end{proof}

\begin{remark}[The two fixed coordinate blocks]
The function $G_t$ in \eqref{eq:G-definition} uses the entire fixed
label family. It is not a function chosen after conditioning on the
active labels. The active class $\mathcal S$ is used only to prove a
pointwise lower bound for that fixed function. Therefore the sole NA
application in \eqref{eq:NA-G} is between $A_t$ and $A_t^c$ under the
original law. Neither conditional NA nor an expected-supremum comparison
deduced from individual linear forms is being assumed.
\end{remark}

\section{The reduced signed minoration}\label{sec:reduced}

\begin{theorem}[From directed witnesses to signed minoration]\label{thm:reduced}
Let $Y\ge0$ be log-concave and NA. Let $T$ be a finite label set,
$N=|T|\ge2$, and $p=\log N$. Suppose the supports satisfy
$|A_t|\le m\le p/64$. For some $u>0$, form
\[
 K_t=K_{A_t}(p,u)
\]
as in \eqref{eq:actual-body}, and assume the directed packing condition
\eqref{eq:H}. Then
\begin{equation}\label{eq:reduced-signed}
 \E_{Y,\varepsilon}\max_{t\in T}
       \left|\sum_{i\in A_t}\varepsilon_iY_i\right|
       \ge\frac{c_B}{64}\,u.
\end{equation}
\end{theorem}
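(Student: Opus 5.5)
The plan is to chain the deterministic selection of \cref{cor:selection} with the signed counting estimate of \cref{lem:signed-count}. A fixed shrinking of the witnesses, justified by \eqref{eq:tail-shrink}, links the two: it makes the expected active count $\Lambda$ exponentially large.

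First, $m\ge1$. Condition \eqref{eq:H} forces the $A_t$ to be distinct, and $N\ge2$, so some $A_t$ is nonempty. Together with $m\le p/64$ this also gives $p\ge64$.

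The bodies $K_t=K_{A_t}(p,u)$ are nonempty, compact, convex and downward closed, supported on $A_t$, by \cref{lem:tail}. So \cref{thm:encoder} and \cref{cor:selection} apply. They give $T'\subset T$ with $|T'|\ge e^{p/2}$ and vectors $a_t\in K_t$ such that
\[
a_t(A_t\setminus A_s)\ge u/8\qquad(t\ne s \text{ in } T').
\]
Membership $a_t\in K_t$ means $Q_Y(a_t)\ge e^{-p}$.

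Next fix $\theta=1/4$ and put $b_t=\theta a_t$. These vectors are supported on $A_t$ and satisfy \eqref{eq:b-separation} on $T'$ with $\beta=u/32$. By \eqref{eq:tail-shrink},
\[
q_t=Q_Y(b_t)\ge e^{-\theta p}=e^{-p/4}.
\]
Hence
\[
\Lambda=\sum_{t\in T'}q_t\ge e^{p/2}e^{-p/4}=e^{p/4}>0.
\]

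Now apply \cref{lem:signed-count} to the label family $T'$, which has at least two elements. Since restricting the maximum to $T'$ can only decrease it,
\[
\E\max_{t\in T}\Bigl|\sum_{i\in A_t}\varepsilon_iY_i\Bigr|
\ \ge\ \E\max_{t\in T'}\Bigl|\sum_{i\in A_t}\varepsilon_iY_i\Bigr|
\ \ge\ c_B\frac{u}{32}\Bigl(1-\frac{L_m}{\Lambda}\Bigr)_+ .
\]

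The remaining step is the numerical check that $L_m\le\Lambda/2$. We have
\[
L_m=2^m\lceil e^{2m}\rceil\le 2\cdot 2^me^{2m}=2e^{(2+\log2)m}\le 2e^{2.7p/64}.
\]
So it suffices that $4e^{0.043p}\le e^{p/4}$, which holds once $0.207\,p\ge\log 4\approx1.39$, i.e.\ for $p\ge7$; this is guaranteed by $p\ge64$. This yields $(1-L_m/\Lambda)_+\ge1/2$, and hence the bound $\frac{c_B}{64}u$ in \eqref{eq:reduced-signed}.

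The main obstacle is bookkeeping rather than any new idea. The shrinking factor $\theta$ must be small enough that $\Lambda$ beats $L_m$, yet large enough that $\beta$ keeps the stated constant; $\theta=1/4$ together with the $u/8$ gain gives exactly $u/32\cdot\frac12=u/64$.
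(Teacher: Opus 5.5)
Your proposal is correct and follows the paper's proof essentially step for step. It uses the same application of \cref{thm:encoder} and \cref{cor:selection} to obtain $T'$ with $|T'|\ge e^{p/2}$, the same shrinking $b_t=a_t/4$ giving $\Lambda\ge e^{p/4}$ and $\beta=u/32$, and the same numerical check $2L_m\le 4e^{(2+\log 2)m}\le e^{p/4}$ before invoking \cref{lem:signed-count}. Your derivation of $m\ge1$ from the distinctness of the $A_t$ is equivalent to the paper's remark that $m=0$ is incompatible with \eqref{eq:H}.
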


\begin{proof}
The case $m=0$ is incompatible with \eqref{eq:H}, so $m\ge1$ and
$p\ge64$. By \cref{lem:tail}, the bodies meet all assumptions of
\cref{thm:encoder}. Apply that theorem and \cref{cor:selection} to get
$T'\subset T$ with $|T'|\ge e^{p/2}$ and fixed witnesses $a_t$ such that
\begin{equation}\label{eq:reduced-selected}
 \Prob(Y\ge a_t)\ge e^{-p},\qquad
 a_t(A_t\setminus A_s)\ge u/8\quad(t\ne s\text{ in }T').
\end{equation}
Put $b_t=a_t/4$. By \eqref{eq:tail-shrink},
\[
 q_t=\Prob(Y\ge b_t)\ge e^{-p/4},\qquad
 \Lambda=\sum_{t\in T'}q_t\ge e^{p/4},\qquad
 \beta=u/32.
\]
Since
\[
 2L_m\le4e^{(2+\log2)m}
       \le4e^{(2+\log2)p/64}\le e^{p/4}\le\Lambda
       \qquad(p\ge64),
\]
\cref{lem:signed-count} gives the bound $c_Bu/64$ on $T'$.
The supremum over $T$ is at least as large.
\end{proof}

The positive analogue follows from \cref{prop:positive-count}, with
an even smaller counting threshold. The signed theorem is used for the
final reduction, so positivity is not imposed on the original process.

\section{Proof of the main theorem}\label{sec:main-proof}

We now verify that the classical reduction gives all hypotheses of
\cref{thm:reduced}, with constants that allow the approximation error
to be absorbed. The distinction between the original moment parameter
$P$ and the entropy parameter $p$ is kept explicit.

\begin{proof}[Proof of \cref{thm:main}]
Write $W=\W_X(T)$. Unconditionality gives $\E X=0$ and the sign
representation \eqref{eq:sign-representation}. Delete zero coordinates
and make a positive diagonal change to isotropic position if needed;
the corresponding inverse change of labels preserves the process.

Fix in advance
\begin{equation}\label{eq:fixed-constants}
 \eta=\frac1{256},\qquad
 c_0=\frac{c_B}{1024C_w},\qquad
 \epsilon=\frac{c_0}{2}.
\end{equation}
Increase $P_0=P_0(\eta,\epsilon)$ if necessary so that $P_0\ge256$.
Suppose first that $P\ge P_0$. Apply \cref{input:reduction}.
If \eqref{eq:easy-reduction-branch} holds, there is nothing further
to prove. Otherwise use its binary family, and set
\begin{equation}\label{eq:entropy-vs-moment}
 N=\lceil e^{P/4}\rceil,\qquad p=\log N,
 \qquad \frac P4\le p\le\frac P2.
\end{equation}
The last upper bound holds in the present large-$P$ range. Set
$Y_i=r_i|X_i|$. This is a nonnegative log-concave NA vector, and
\[
 M_{\rm bin}=\E_{Y,\varepsilon}\max_t
                      \left|\sum_{i\in A_t}\varepsilon_iY_i\right|.
\]
By \eqref{eq:reduction-support},
\begin{equation}\label{eq:thin-final}
 m:=\max_t|A_t|\le\frac P{256}\le\frac p{64}.
\end{equation}

Fix distinct labels $s,t$ and let $J=A_t\triangle A_s$.
Then $|J|\le2m\le P$. By unconditionality, the linear form in
\eqref{eq:reduction-packing} has the same law as
$\sum_{i\in J}r_iX_i$. Hence \cref{input:witness} gives a vector
$v\ge0$, supported on $J$, with
\begin{equation}\label{eq:pair-witness-P}
 \Prob(Y\ge v)\ge e^{-P},\qquad \norm{v}_1\ge\frac{a}{2C_w}.
\end{equation}
At least one directed difference, say $D=A_t\setminus A_s$, carries
mass at least $a/(4C_w)$. Restrict $v$ to $D$; this only increases
the probability of its upper-orthant event. Next multiply it by
$\theta=p/P\in[1/4,1/2]$. By \cref{lem:tail}, the resulting vector
$w$ satisfies
\begin{equation}\label{eq:pair-witness-p}
 \supp w\subset D,\qquad \Prob(Y\ge w)\ge e^{-p},\qquad
 \norm{w}_1\ge\frac{a}{16C_w}.
\end{equation}
Put
\begin{equation}\label{eq:u-final}
                     u=\frac{a}{16C_w}.
\end{equation}
If necessary scale $w$ down once more to have mass exactly $u$.
It then belongs to $K_{A_t}(p,u)$ and is supported on $D$.
The other orientation of the pair is treated identically.
Thus \eqref{eq:H} holds for the actual joint-tail bodies.

Apply \cref{thm:reduced} and then \eqref{eq:reduction-transfer}:
\[
 M_{\rm bin}\ge\frac{c_Bu}{64}=c_0a,
 \qquad
 2W\ge M_{\rm bin}-\epsilon a\ge\frac{c_0a}{2}.
\]
Consequently $W\ge c_0a/4$ in this branch. The constants in
\eqref{eq:fixed-constants} were chosen before applying the reduction;
there is no dependence of the reduced lower bound on a subsequently
chosen sparsity parameter.

Finally, let $1\le P<P_0$. Select any distinct $s,t\in T$ and set
$V=\ip{t-s}{X}$. By \eqref{eq:scalar-growth},
\[
 \E|V|\ge\frac{a}{C_{\rm reg}P_0}.
\]
Symmetry and centering give
\[
 2W=\E\left(\max_{t\in T}\ip{t}{X}
                  -\min_{t\in T}\ip{t}{X}\right)
       \ge\E|V|.
\]
Taking
\[
 c=\min\left\{c_{\eta,\epsilon},\frac{c_0}{4},
                  \frac1{2C_{\rm reg}P_0}\right\}
\]
proves \eqref{eq:main-smp} for every $P\ge1$.
\end{proof}

\section{Two consequences}\label{sec:consequences}

\subsection{Products of Orlicz-based measures}

The first consequence gives an explicit class of dependent measures to
which the theorem applies. A \emph{Young function} is a lower
semicontinuous, convex nondecreasing function
$\phi:[0,\infty)\to[0,\infty]$ with $\phi(0)=0$, not identically
zero and finite at some positive point.

\begin{corollary}\label{cor:orlicz}
Let $I_1,\ldots,I_k$ be a partition of $[d]$. For each $i$ let $\phi_i$
be a Young function, and for each $j$ let
$h_j:[0,\infty)\to[0,\infty)$ be log-concave and nonincreasing.
Assume that
\begin{equation}\label{eq:orlicz-density}
 f(x)=\frac1Z\prod_{j=1}^k
       h_j\!\left(\sum_{i\in I_j}\phi_i(|x_i|)\right)
\end{equation}
is a probability density, with $0<Z<\infty$ and $h_j(\infty)=0$.
Then a vector with density $f$ satisfies the conclusion of
\cref{thm:main}, with the same universal constant, independent of
$k$, the block dimensions, and the defining functions. In particular,
the conclusion holds for the uniform measure on every Cartesian
product of generalized Orlicz balls.
\end{corollary}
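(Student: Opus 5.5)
The plan is to check the three hypotheses of \cref{thm:main} for a vector $X$ with density \eqref{eq:orlicz-density}: unconditionality, log-concavity, and NA of $(|X_i|)_{i=1}^d$. Once these hold, the conclusion follows directly from \cref{thm:main}, and the constant is the universal one. Unconditionality is immediate, because $f$ depends on each $x_i$ only through $|x_i|$.

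For log-concavity, each map $x\mapsto\sum_{i\in I_j}\phi_i(|x_i|)$ is convex with values in $[0,\infty]$, since $\phi_i$ is convex and nondecreasing and $|x_i|$ is convex. Composing a convex function with a nonincreasing log-concave $h_j$ gives a log-concave function. Indeed, for $\lambda\in(0,1)$,
\[
h_j\bigl(\Phi(\lambda x+(1-\lambda)y)\bigr)\ge h_j\bigl(\lambda\Phi(x)+(1-\lambda)\Phi(y)\bigr)\ge h_j(\Phi(x))^\lambda h_j(\Phi(y))^{1-\lambda}.
\]
The convention $h_j(\infty)=0$ handles points where $\phi_i$ is infinite, and lower semicontinuity ensures the superlevel sets are closed. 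A product of log-concave functions is log-concave, so $f$ is a log-concave density.

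NA of the magnitudes is the main step, and it is where I expect the real work. The density of $R=(|X_i|)$ on $\R_+^d$ factorizes over the blocks $I_j$, so the blocks $R_{I_1},\ldots,R_{I_k}$ are independent. By the closure property of Joag-Dev and Proschan~\cite{JoagDev}, a union of independent NA families is NA. It is therefore enough to show that each block $R_{I_j}$, with density proportional to $h_j\bigl(\sum_{i\in I_j}\phi_i(r_i)\bigr)$ on the orthant, is NA. This is the setting of Pilipczuk--Wojtaszczyk~\cite{PilipczukWojtaszczyk} (uniform case $h=\one_{[0,1]}$) and of Wojtaszczyk's extension~\cite{Wojtaszczyk} to densities of the form $h(\sum\phi_i(|x_i|))$ with $h$ log-concave and nonincreasing. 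I would cite that result in this generality.

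If the cited form only covers the uniform measure on generalized Orlicz balls, I would reduce to it. One writes $h_j(s)=\int\one_{\{s\le\lambda\}}\,d\mu(\lambda)$ as a mixture of indicators of Orlicz balls, or realizes the density as a marginal of the uniform measure on a generalized Orlicz ball in higher dimension by adding one extra coordinate $y$ with Young function $\psi$ chosen so that the $y$-marginal reproduces $h_j$. Since NA is inherited by subfamilies, this would finish the argument. Mixtures alone do not preserve NA, so the marginal construction is the route I would try first. Its delicate point is matching log-concavity of $h_j$ to convexity of the auxiliary Young function, for example through $h_j=e^{-V}$ with $V$ convex. Degenerate cases where some $\phi_i$ is infinite beyond a point are handled by approximation, since NA is preserved under weak limits for bounded continuous increasing test functions. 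The final assertion about Cartesian products of generalized Orlicz balls is the special case $h_j=\one_{[0,1]}$, which is log-concave and nonincreasing.
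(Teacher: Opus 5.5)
Your proposal is correct and follows the paper's proof. Unconditionality and log-concavity are read off from the form of $f$, and the blocks of $(|X_i|)$ are independent because the density factorizes. NA within each block comes from Wojtaszczyk's theorem in exactly the generality $h(\sum_i\phi_i(|x_i|))$ with $h$ log-concave and nonincreasing. NA of the independent union follows from the Joag-Dev--Proschan closure property, which the paper verifies directly by conditioning on one block.

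Your fallback is not needed. Note, though, that adjoining a single coordinate with Young function $\psi$ yields only concave profiles $h(s)\propto\psi^{-1}(1-s)$. That route therefore could not reach a general log-concave $h_j$ such as $e^{-s}$ without further auxiliary coordinates and a limiting argument.
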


\begin{proof}
Each factor in \eqref{eq:orlicz-density} is unconditional and
log-concave. Indeed, $-\log h_j$ is convex and nondecreasing on its
effective domain, so its composition with
$\sum_{i\in I_j}\phi_i(|x_i|)$ is convex.
Wojtaszczyk's theorem~\cite[Theorem~1.3]{Wojtaszczyk} gives NA of the
magnitudes in each block. The blocks are independent because the
density factors.

For completeness, independent unions of NA families are NA. For two
independent blocks $U,V$ and increasing functions $F,G$ depending on
disjoint coordinate sets, fix $V$ and apply NA in $U$. This bounds
$\E_U(FG)$ by $(\E_U F)(\E_U G)$. The two expectations are increasing
functions of disjoint coordinate sets of $V$, so NA in $V$ gives
$\E(FG)\le\E F\,\E G$. Iteration proves the assertion for all
blocks. Thus \cref{thm:main} applies.

Taking $h_j=\one_{[0,R_j]}$ gives the uniform law on
\[
 \prod_{j=1}^k
 \left\{x_{I_j}:\sum_{i\in I_j}\phi_i(|x_i|)\le R_j\right\},
\]
whenever these sets are convex bodies.
\end{proof}

For example, one may take $h_j(s)=\exp(-V_j(s))$ with $V_j$ convex
and nondecreasing, allowing the value $+\infty$ to impose a hard
constraint. Weighted $\ell_q$ balls, $q\ge1$, correspond to
$\phi_i(s)=w_i s^q$ with $w_i>0$. Different blocks may have different
Young functions and different profiles $h_j$. The conclusion is
therefore uniform over products with several separate Orlicz constraints.

\Needspace{28\baselineskip}
\subsection{Coverings in the moment metric}

For a bounded nonempty set $T\subset\R^d$, let $N(T,d_p,r)$ denote
the least number of closed $d_p$-balls of radius $r$, with centers in
$T$, needed to cover $T$. This definition also applies when $d_p$
is a pseudometric. Write $\W_X(T)=\E\sup_{t\in T}\ip{t}{X}$;
in finite dimension this is finite for bounded $T$.

\begin{corollary}\label{cor:covering}
Let $X$ satisfy the assumptions of \cref{thm:main}. There is a universal
$C>0$ such that, for every bounded nonempty $T\subset\R^d$ and every
$p\ge1$,
\begin{equation}\label{eq:moment-covering}
 N\bigl(T,d_p,C\W_X(T)\bigr)<e^p.
\end{equation}
In particular, suppose $X$ is nondegenerate and define its symmetric
$L_p$ centroid body by
\[
 h_{Z_p(X)}(t)=\norm{\ip{t}{X}}_p.
\]
For every origin-symmetric convex body $K\subset\R^d$,
\begin{equation}\label{eq:centroid-covering}
 N\bigl(K,C\E h_K(X)\,Z_p(X)^\circ\bigr)<e^p,
\end{equation}
where $N(K,L)$ is the least number of translates of $L$ covering $K$,
and $L^\circ$ denotes the polar body. Consequently, for every norm
on $\R^d$ with dual unit ball $B^*$,
\begin{equation}\label{eq:norm-covering}
 N\bigl(B^*,C\E\norm{X}\,Z_p(X)^\circ\bigr)<e^p.
\end{equation}
\end{corollary}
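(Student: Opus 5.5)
The plan is to derive the covering bound \eqref{eq:moment-covering} from \cref{thm:main} by a maximal packing argument, then translate it into the centroid-body language.

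\emph{Step 1: a separated set of size $e^p$ forces a large Gaussian-type width.} Set $C=2/c$, where $c$ is the constant of \cref{thm:main}. Suppose $N(T,d_p,C\W_X(T))\ge e^p$. If $\W_X(T)=0$, then $\ip{t}{X}$ is a.s.\ constant in $t$ after centering; by symmetry $\ip{t-s}{X}=0$ a.s., so $d_p\equiv0$ on $T$ and one ball covers, giving $1<e^p$. Hence assume $r:=C\W_X(T)>0$.

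\emph{Step 2: extract a packing.} Take a maximal $r$-separated subset $S\subset T$ in $d_p$, meaning $d_p(s,t)>r$ for distinct $s,t\in S$. By maximality, balls of radius $r$ centred at $S$ cover $T$. Boundedness of $T$ and finiteness of $d_p$ on bounded sets do not by themselves make $S$ finite. However, if $S$ had at least $\lceil e^p\rceil$ points, we could take a finite subset $S'$ with $|S'|\ge e^p$. Otherwise $|S|<e^p$, which contradicts the assumed covering lower bound. So there is a finite $S'\subset T$ with $|S'|\ge e^p$ and pairwise $d_p>r$.

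\emph{Step 3: apply \cref{thm:main} and conclude.} Apply \cref{thm:main} with $P=p$ and $a=r$ to get
\[
\W_X(T)\ge\W_X(S')\ge cr=2\W_X(T)>\W_X(T),
\]
a contradiction. The only care needed is the strictness of the inequality, which is handled by the choice $C=2/c$ and by $\W_X(T)>0$.

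\emph{Step 4: the geometric forms.} For \eqref{eq:centroid-covering}, apply \eqref{eq:moment-covering} with $T=K$. By symmetry of $K$ and of $X$, $\W_X(K)=\E\sup_{t\in K}\ip{t}{X}=\E h_K(X)$. A $d_p$-ball of radius $\rho$ about $t_0$ is
\[
\{t:\norm{\ip{t-t_0}{X}}_p\le\rho\}=t_0+\rho\{z:h_{Z_p(X)}(z)\le1\}=t_0+\rho Z_p(X)^\circ.
\]
Nondegeneracy makes $Z_p(X)$ a convex body containing the origin in its interior, so its polar is a convex body. Coverings with centres in $K$ are a particular kind of covering by translates, so $N(K,\rho Z_p^\circ)\le N(K,d_p,\rho)<e^p$. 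Finally, \eqref{eq:norm-covering} is the case $K=B^*$, since $h_{B^*}(x)=\norm{x}$.

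I expect no step to be a serious obstacle. The points needing care are the degenerate case $\W_X(T)=0$, the passage from a possibly infinite maximal packing to a finite subset for Step 2, and keeping the separation strict so that Step 3 gives a genuine contradiction.
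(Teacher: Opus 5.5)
Your proof is correct and is essentially the paper's argument: take $C=2/c$, treat $\W_X(T)=0$ separately, show that a $d_p$-separated subfamily of size $\lceil e^p\rceil$ would contradict \cref{thm:main} (you use a maximal separated set where the paper uses a greedy selection), and obtain the geometric forms by identifying closed $d_p$-balls with translates of $\rho Z_p(X)^\circ$. One small correction that does not affect the argument: your worry that $S$ might be infinite is unnecessary, because $d_p$ is Lipschitz with respect to the Euclidean distance (log-concave vectors have all moments), so bounded sets are totally bounded in $d_p$; your extraction of a finite $S'$ covers this case anyway.
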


\begin{proof}
Let $c$ be the constant in \cref{thm:main}, and take $C=2/c$.
First suppose $W:=\W_X(T)>0$ and set $r=CW$. If a subset
$S\subset T$ of size $\lceil e^p\rceil$ had all its pairwise
$d_p$-distances greater than $r$, then \cref{thm:main} would give
\[
 W\ge\W_X(S)\ge cr=2W,
\]
a contradiction. A greedy choice of points at distance greater than
$r$ from all previously chosen points must therefore stop after
fewer than $e^p$ points. At that time the closed balls of radius $r$
cover $T$, proving \eqref{eq:moment-covering}.

If $W=0$, centering and symmetry imply, for every $s,t\in T$,
\[
 \E|\ip{t-s}{X}|
   =2\E\max\{\ip{t}{X},\ip{s}{X}\}\le2W=0.
\]
Thus $d_p(s,t)=0$ and one ball of radius zero suffices.
Finally, the unit ball of $t\mapsto\norm{\ip{t}{X}}_p$ is exactly
$Z_p(X)^\circ$. Apply \eqref{eq:moment-covering} to $T=K$ to obtain
\eqref{eq:centroid-covering}. Taking $K=B^*$ and using
$h_{B^*}(x)=\norm{x}$ gives \eqref{eq:norm-covering}.
\end{proof}

The covering estimate is a geometric formulation of the minoration:
at each moment order $p$, fewer than $e^p$ translates of the polar
centroid body suffice at a scale fixed by the expected supremum.
It involves the centroid body of the dependent law itself.

\section{Remarks on the argument}\label{sec:remarks}

The deterministic and probabilistic steps use different hypotheses.
Convexity and downward closure of the witness bodies suffice for
\cref{thm:encoder}; no dependence assumption is used there.
Log-concavity supplies these bodies through joint upper-orthant
probabilities and allows the selected witnesses to be shrunk at a
controlled probability cost. Unconditionality supplies independent
signs and the sparse reduction. Negative association is used in
\eqref{eq:NA-G}, between a fixed witness event on $A_t$ and a fixed
increasing function of the coordinates in $A_t^c$.

The construction does not require a single threshold vector whose
restrictions serve every label. It produces one threshold vector per
retained label, fixed against every other retained label, by averaging
complete encoders under one common law. This simultaneous construction
avoids an iteration of support selections. The expected-count estimate
\eqref{eq:count-master} then controls the overlapping witness events
collectively, retaining the random signs. The proof uses neither
stability of NA under conditioning nor a dimension-free concentration
theorem. Removing NA from \cref{thm:main} would require a replacement
for this counting estimate.

\section*{Funding}
The first author was supported by NCN Grant UMO-2022/47/B/ST1/02114.

\end{document}